\newtheoremstyle{ptheorem}{1em}{0em}{\itshape}{}{\bfseries}{.}{.5em}{}
\theoremstyle{ptheorem}
\newtheorem{thm}{Theorem}[section]
\newtheorem{pro}[thm]{Proposition}
\newtheorem{lem}[thm]{Lemma}
\newtheorem{cor}[thm]{Corollary}
\theoremstyle{definition}
\newtheorem{dfn}{Definition}[section]
\theoremstyle{remark}
\newtheorem{exa}{Example}[section]
\newtheorem{rem}{Remark}[section]
\numberwithin{equation}{section}
\numberwithin{figure}{section}
\DeclareMathOperator{\sign}{sign}
\DeclareMathOperator{\dif}{d}
\newcommand{\cC}{{\mathcal C}}
\newcommand{\cF}{{\mathcal F}}
\newcommand{\cM}{{\mathcal M}}
\newcommand{\bC}{{\mathbb C}}
\newcommand{\bR}{{\mathbb R}}
\newcommand{\bZ}{{\mathbb Z}}
\renewcommand{\a}{\alpha}
\renewcommand{\b}{\beta}
\renewcommand{\c}{\gamma}
\renewcommand{\l}{\lambda}
\newcommand{\e}{\epsilon}
\renewcommand{\L}{\Lambda}
\renewcommand{\phi}{\varphi}
\newcommand{\ol}{\overline}
\newcommand{\fa}{\forall}
\newcommand{\nkp}{\enskip}
\newcommand{\sfa}{\nkp\fa}
\renewcommand{\d}{\delta}
\renewcommand{\(}{\left(}
\renewcommand{\)}{\right)}
\renewcommand{\[}{\left[}
\renewcommand{\]}{\right]}
\begin{document}
\title{Existence results for a linear equation with reflection, non-constant coefficient and periodic boundary conditions\footnote{Partially supported by FEDER and Ministerio de Educaci\'on y Ciencia, Spain, project MTM2010-15314}}

\author{
Alberto Cabada \, and F. Adri\'an F. Tojo\footnote{Supported by  FPU scholarship, Ministerio de Educaci\'on, Cultura y Deporte, Spain.} \\
\normalsize
Departamento de An\'alise Ma\-te\-m\'a\-ti\-ca, Facultade de Matem\'aticas,\\ 
\normalsize Universidade de Santiago de Com\-pos\-te\-la, Spain.\\ 
\normalsize e-mail: alberto.cabada@usc.es, fernandoadrian.fernandez@usc.es}
\date{}

\maketitle

\begin{abstract}
This work is devoted to the study of first order linear problems with involution and periodic boundary value conditions. We first prove a correspondence between a large set of such problems with different involutions to later focus our attention to the case of the reflection. We study then different cases for which a Green's function can be obtained explicitly and derive several results in order to obtain information about its sign. Once the sign is known, maximum and anti-maximum principles follow. We end this work with more general existence and uniqueness of solution results.
\end{abstract}

\noindent {\bf Keywords:}  Equations with involutions. Equations with reflection. Green's functions.  Maximum principles. Comparison principles. Periodic conditions.

\section{Introduction}
In a previous paper by the authors \cite{Cab4}, a Green's function for the following linear problem with reflection was found.
\begin{equation}\label{eqoriginal}x'(t)+\omega x(-t)=h(t), t\in I;\quad x(-T)=x(T),\end{equation}
where $T\in\bR^+$, $\omega\in\bR\backslash\{0\}$ and $h\in L^1(I)$, with $I=[-T,T]$. The precise form of this Green's function was given by the following theorem.
\begin{thm}[\cite{Cab4}, Proposition 3.2]\label{Greenf} Suppose that $\omega \neq k \, \pi/T$, $k \in \bZ$. Then problem (\ref{eqoriginal}) has a unique solution given by the expression
\begin{equation}
\label{e-u}
u(t):=\int_{-T}^T\ol G(t,s)h(s)\dif s,
\end{equation}
where $$\ol{G}(t,s):=\omega\,G(t,-s)-\frac{\partial G}{\partial s}(t,s)$$
and $G$ is the Green's function for the harmonic oscillator
$$x''(t)+\omega^2x(t)=0;\ x(T)=x(-T),\ x'(T)=x'(-T).$$
\end{thm}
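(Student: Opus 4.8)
The plan is to reduce the first order problem with reflection to the classical second order periodic problem for the harmonic oscillator, for which $G$ is by hypothesis the Green's function. Writing the equation as $x'(t)+\omega x(-t)=h(t)$ and differentiating gives $x''(t)-\omega x'(-t)=h'(t)$, since $\frac{\dif}{\dif t}x(-t)=-x'(-t)$; on the other hand, evaluating the original equation at $-t$ yields $x'(-t)=h(-t)-\omega x(t)$. Substituting the latter into the former eliminates the reflected derivative and produces
\begin{equation*}
x''(t)+\omega^2 x(t)=h'(t)+\omega h(-t).
\end{equation*}
Thus every solution of (\ref{eqoriginal}) solves the harmonic oscillator equation with forcing $\til h(t):=h'(t)+\omega h(-t)$. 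The second order boundary data are obtained by evaluating the first order equation at $t=\pm T$ and using $x(-T)=x(T)$: they read $x(T)=x(-T)$ together with the, in general non homogeneous, condition $x'(T)-x'(-T)=h(T)-h(-T)$.

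First I would settle uniqueness. Applying the reduction to $h\equiv0$ shows that any solution of the homogeneous problem $x'(t)+\omega x(-t)=0$, $x(-T)=x(T)$ also solves $x''+\omega^2x=0$ with the genuinely periodic conditions $x(T)=x(-T)$, $x'(T)=x'(-T)$. Inserting $x(t)=A\cos\omega t+B\sin\omega t$ and evaluating these two conditions gives $B\sin\omega T=0$ and $A\sin\omega T=0$, so the only solution is trivial precisely when $\sin\omega T\neq0$, that is when $\omega\neq k\pi/T$, $k\in\bZ$. This is exactly the standing hypothesis (and also the condition under which $G$ exists); taking differences of two solutions, the first order problem has at most one solution.

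For existence and the explicit kernel, I would solve the second order problem by superposing the particular solution furnished by $G$ with a homogeneous correction that restores the derivative condition. The particular part is $\int_{-T}^TG(t,s)\til h(s)\dif s$; integrating the $h'$ term by parts turns $\int_{-T}^T G(t,s)h'(s)\dif s$ into $-\int_{-T}^T\partial_sG(t,s)h(s)\dif s$ plus the boundary contribution $G(t,T)h(T)-G(t,-T)h(-T)$, while the change of variable $s\mapsto-s$ converts $\omega\int_{-T}^T G(t,s)h(-s)\dif s$ into $\omega\int_{-T}^T G(t,-s)h(s)\dif s$. Together these two manipulations already assemble the kernel $\ol G(t,s)=\omega G(t,-s)-\partial_sG(t,s)$. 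The homogeneous correction needed to realise $x'(T)-x'(-T)=h(T)-h(-T)$ is a multiple of $\cos\omega t$, and using the explicit boundary value $G(t,T)=G(t,-T)=\cos(\omega t)/(2\omega\sin\omega T)$ one checks that it cancels exactly the leftover boundary terms. What survives is $u(t)=\int_{-T}^T\ol G(t,s)h(s)\dif s$, which is (\ref{e-u}).

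The step I expect to be most delicate is the converse: showing that the function $u$ just built, which by construction solves the second order problem, really solves the first order equation with reflection, since passing to second order a priori loses information. To close this gap I would set $z(t):=u'(t)+\omega u(-t)-h(t)$ and compute, using $u''+\omega^2u=\til h$, that $z'(t)=-\omega z(-t)$, while the boundary conditions on $u$ give $z(T)=z(-T)$. Hence $z$ itself solves the homogeneous problem and so $z\equiv0$ by the uniqueness already established, whence $u$ solves (\ref{eqoriginal}). A secondary technical point is the low regularity $h\in L^1$: because $\til h$ involves $h'$, the intermediate identities are cleanest read in the integrated (distributional) sense, or first proved for smooth $h$ and extended by density, the final formula (\ref{e-u}) being manifestly well defined for every $h\in L^1$.
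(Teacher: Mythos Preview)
Your argument is correct, and the $z$-trick at the end is a clean way to close the loop from second order back to first. It is worth noting, however, that in this paper Theorem~\ref{Greenf} is only quoted from \cite{Cab4}; the proof actually carried out here is that of the generalisation Theorem~\ref{Greenf2}, and that is the natural point of comparison.

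The route taken in the paper is quite different from yours. Rather than deriving the kernel, the paper \emph{postulates} $\ol G(t,s)=a\,G(t,-s)-b\,G(t,s)+\partial_tG(t,s)$ (which reduces to $\omega G(t,-s)-\partial_sG(t,s)$ when $b=0$, using $\partial_tG=-\partial_sG$) and verifies directly that $u(t)=\int_{-T}^T\ol G(t,s)h(s)\dif s$ satisfies the first order equation. Concretely, one computes $u'(t)+a\,u(-t)+b\,u(t)$ by differentiating under the integral, picks up $h(t)$ from the unit jump of $\ol G$ across the diagonal, and shows the remaining integrand vanishes using the symmetry and antisymmetry properties $(I)$--$(X)$ of $G$ listed in Lemma~4.1. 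The boundary condition follows at once from property~$(V)$.

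Each approach has its virtue. Yours is constructive and explains \emph{why} the kernel has the shape it has, but it forces you through the auxiliary forcing $\til h=h'+\omega h(-\cdot)$, which is awkward for $h\in L^1(I)$; you rightly flag the need for a density or distributional argument, and the cancellation of the boundary term $G(t,T)(h(T)-h(-T))$ against the $\cos\omega t$ correction requires the explicit value $G(t,T)=\cos(\omega t)/(2\omega\sin\omega T)$. The paper's verification never differentiates $h$, so it works directly at the stated regularity, but it does rely on the full list of identities for $G$ and gives no hint of how the formula was discovered.
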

The sign properties of this Green's function were further studied in \cite{Cab5}, where the methods similar to those found in \cite{gijwjiea, gijwjmaa, gijwems} are used in order to derive existence and multiplicity results.\par
Still, obtaining the Green's function of problem \eqref{eqoriginal} with a non-constant coefficient has not been accomplished yet. In this article we will study this case and further generalize the existence of Green's functions. Through a correspondence theorem, we will also be able to extend these results to problems with other involutions. We will also obtain new maximum and anti-maximum principles and existence and uniqueness results.

\section{Order one linear problems with involutions}
Assume $\phi$ is a differentiable involution on $[\phi(T),T]$. Let $a,b,c,d\in L^1([\phi(T),T])$ and consider the following problem
\begin{equation}\label{proinv1} d(t)x'(t)+c(t)x'(\phi(t))+b(t)x(t)+a(t)x(\phi(t))=h(t),\ x(\phi(T))=x(T).
\end{equation}
It would be interesting to know under what circumstances problem \eqref{proinv1} is equivalent to another problem of the same kind but with a different involution, in particular the reflection. The following two results will help us to clarify this situation.
\begin{lem}[\textsc{Correspondence of Involutions}]\label{corofinv} Let $\phi$ and $\psi$ be two differentiable involutions\footnote{Every differentiable involution is a diffeomorphism.} on the intervals $[\phi(T),T]$ and $[\psi(S),S]$ respectively. Let $t_0$ and $s_0$ be the unique fixed points of $\phi$ and $\psi$ respectively. Then, there exists an orientation preserving diffeomorphism $f:[\psi(S),S]\to[\phi(T),T]$ such that  $f(\psi(s))=\phi(f(s))\sfa s\in[\psi(S),S]$.
\end{lem}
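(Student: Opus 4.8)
The plan is to show that $\phi$ and $\psi$ are smoothly conjugate by reducing \emph{both} involutions to the same normal form---the reflection $R(x)=-x$ about the origin---and then composing the two conjugacies through a linear rescaling.

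First I would record the rigid structure forced by the hypotheses. On a nondegenerate interval a differentiable involution that is not the identity cannot be increasing (an increasing involution fixes every point), so $\phi$ is strictly decreasing, interchanges the endpoints $\phi(T)$ and $T$, and hence has a unique fixed point $t_0$, since $t\mapsto\phi(t)-t$ is strictly decreasing and changes sign. Differentiating $\phi(\phi(t))=t$ at $t_0$ gives $\phi'(t_0)^2=1$, so $\phi'(t_0)=-1$. The same conclusions hold for $\psi$ with fixed point $s_0$.

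Next I would linearize each involution. Define $p\colon[\phi(T),T]\to\bR$ by $p(t)=\tfrac12\,(t-\phi(t))$. Since $\phi'<0$, we have $p'=\tfrac12(1-\phi')>0$, so $p$ is an orientation-preserving diffeomorphism onto its image; moreover $p(t_0)=0$ and $p(\phi(T))=-p(T)$, so the image is the symmetric interval $[-\alpha,\alpha]$ with $\alpha=\tfrac12(T-\phi(T))$. The crucial identity is
\[
p(\phi(t))=\tfrac12\bigl(\phi(t)-\phi(\phi(t))\bigr)=-p(t),
\]
that is, $p$ conjugates $\phi$ to $R$. Setting $q(s)=\tfrac12(s-\psi(s))$ on $[\psi(S),S]$ yields, in exactly the same way, an orientation-preserving diffeomorphism onto $[-\beta,\beta]$, $\beta=\tfrac12(S-\psi(S))$, satisfying $q\circ\psi=R\circ q$.

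Finally I would splice the two normal forms. Let $\lambda\colon[-\beta,\beta]\to[-\alpha,\alpha]$ be the linear scaling $\lambda(x)=(\alpha/\beta)\,x$; it is an orientation-preserving diffeomorphism and, being odd, commutes with $R$. Then $f:=p^{-1}\circ\lambda\circ q$ is an orientation-preserving diffeomorphism of $[\psi(S),S]$ onto $[\phi(T),T]$, and the required intertwining is a one-line computation using $q\circ\psi=R\circ q$, then $\lambda\circ R=R\circ\lambda$, then $p^{-1}\circ R=\phi\circ p^{-1}$ (the inverse form of $p\circ\phi=R\circ p$):
\[
f\circ\psi=p^{-1}\circ\lambda\circ q\circ\psi=p^{-1}\circ\lambda\circ R\circ q=p^{-1}\circ R\circ\lambda\circ q=\phi\circ p^{-1}\circ\lambda\circ q=\phi\circ f.
\]
I expect the only delicate point to be the claim that $p$ and $q$ are \emph{global} diffeomorphisms with reflection-symmetric image; this is precisely where strict monotonicity of the involution, together with the normalization $p(t_0)=0$, is used. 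A more naive route---prescribing $f$ freely on $[s_0,S]$ and extending to $[\psi(S),s_0]$ by the forced relation $f(s)=\phi\bigl(f(\psi(s))\bigr)$---also works, but then one must verify $C^1$-compatibility of the two branches at $s_0$, which succeeds exactly because $\phi'(t_0)=\psi'(s_0)=-1$; the symmetrization above sidesteps that case analysis entirely.
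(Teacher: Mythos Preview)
Your proof is correct and takes a genuinely different route from the paper's. The paper does exactly what you call the ``naive route'' at the end: it picks an arbitrary orientation-preserving diffeomorphism $g\colon[\psi(S),s_0]\to[\phi(T),t_0]$, extends it to the other half by the forced relation $f=\phi\circ g\circ\psi$, and then checks $C^1$-gluing at $s_0$ using $\phi'(t_0)=\psi'(s_0)=-1$. Your symmetrization $p(t)=\tfrac12(t-\phi(t))$ is more canonical: it produces an explicit conjugacy to the reflection with no piecewise definition and no gluing verification, and the intertwining becomes a one-line chain of equalities. What the paper's approach buys is flexibility---it shows that \emph{any} half-interval diffeomorphism extends, which is useful for the later remarks about translating fixed points; what your approach buys is a concrete global formula and a cleaner proof that $f$ is automatically a diffeomorphism. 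One small remark: your claim $\phi'<0$ everywhere relies on $\phi$ being a diffeomorphism (so that $\phi'$ never vanishes), which the paper's footnote grants but which you should perhaps state explicitly, since mere differentiability plus strict monotonicity does not preclude isolated zeros of the derivative.
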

\begin{proof}
Let $g:[\psi(S),s_0]\to[\phi(T),t_0]$ be an orientation preserving diffeomorphism, that is, $g(s_0)=t_0$. Let us define
$$f(s):=\begin{cases} g(s) & \text{ if } s\in[\psi(S),s_0], \\ (\phi\circ g\circ\psi)(s) & \text{ if } s\in(s_0,S].\end{cases}$$
\par
Clearly,  $f(\psi(s))=\phi(f(s))\sfa s\in[\psi(S),S]$. Since $s_0$ is a fixed point for $\psi$, $f$ is continuous. Furthermore, because $\phi$ and $\psi$ are involutions, $\phi'(t_0)=\psi'(s_0)=-1$, so $f$ is differentiable. $f$ is invertible with inverse
$$f^{-1}(t):=\begin{cases} g^{-1}(t) & \text{ if } t\in[\phi(T),t_0], \\ (\psi\circ g^{-1}\circ\phi)(t) & \text{ if } t\in(t_0,T].\end{cases}$$
$f^{-1}$ is also differentiable for the same reasons.
\end{proof}
\begin{rem}A similar argument could be done in the case of involutions defined on open, possibly not bounded, intervals.
\end{rem}
\begin{rem} The expression obtained for $f$ reminds us of the characterization of involutions given in \cite[Property 6]{Wie}.
\end{rem}
\begin{rem} It is easy to check that if $\phi$ is an involution defined on $\bR$ with fixed point $t_0$ then $\psi(t):=\phi(t+t_0-s_0)-t_0+s_0$ is an involution defined on $\bR$ with fixed point $s_0$ (cf. \cite[Property 2]{Wie}). For this particular choice of $\phi$ and $\psi$, we can take $g(s)=s-s_0+t_0$ in Lemma \ref{corofinv} and, in such a case, $f(s)=s-s_0+t_0$ for all $s\in\bR$.
\end{rem}

\begin{cor}[\textsc{Change of Involution}] Under the hypothesis of Lemma \ref{corofinv},
problem \eqref{proinv1} is equivalent to
\begin{equation}\label{proinv2} \frac{d(f(s))}{f'(s)}y'(s)+\frac{c(f(s))}{f'(\psi(s))}y'(\psi(s))+b(f(s))y(s)+a(f(s))y(\psi(s))=h(f(s)),\ y(\psi(S))=y(S).
\end{equation}
\end{cor}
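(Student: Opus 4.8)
The plan is to prove the equivalence by a direct change of variables $x(t) = y(f^{-1}(t))$, or equivalently $y(s) = x(f(s))$, using the diffeomorphism $f$ produced by Lemma~\ref{corofinv}. The whole argument is essentially the chain rule together with the intertwining relation $f(\psi(s)) = \phi(f(s))$, which is exactly what converts the involution $\phi$ into $\psi$. First I would set $t = f(s)$ and observe that since $f$ is an orientation-preserving diffeomorphism, this substitution is a bijection between the two intervals carrying solutions to solutions; in particular it suffices to check that $y$ solves \eqref{proinv2} if and only if $x$ solves \eqref{proinv1}.

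Next I would compute the four terms of \eqref{proinv1} after substitution. Differentiating $y(s) = x(f(s))$ gives $y'(s) = x'(f(s)) f'(s)$, so $x'(t) = x'(f(s)) = y'(s)/f'(s)$, which accounts for the coefficient $d(f(s))/f'(s)$ on the leading term once we multiply $d(t) x'(t)$ through and write everything in the variable $s$. The genuinely involution-dependent term is $x'(\phi(t))$: here I would use $\phi(t) = \phi(f(s)) = f(\psi(s))$ from Lemma~\ref{corofinv}, so that $x'(\phi(t)) = x'(f(\psi(s)))$, and then differentiate the identity $y(\psi(s)) = x(f(\psi(s)))$ to get $x'(f(\psi(s))) = y'(\psi(s))/f'(\psi(s))$, explaining the factor $c(f(s))/f'(\psi(s))$. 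For the zeroth-order terms the computation is immediate: $b(t)x(t) = b(f(s)) y(s)$, and $a(t)x(\phi(t)) = a(f(s)) x(f(\psi(s))) = a(f(s)) y(\psi(s))$. Finally the right-hand side becomes $h(t) = h(f(s))$, matching \eqref{proinv2} term by term.

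It remains to translate the boundary condition. Since $f$ is orientation preserving with $f(\psi(S))$ and $f(S)$ the endpoints of $[\phi(T),T]$, I would argue that $f(S) = T$ and $f(\psi(S)) = \phi(T)$; this follows from the intertwining relation evaluated at the endpoint together with $f$ being a bijection of the intervals. Then $x(\phi(T)) = x(T)$ reads $x(f(\psi(S))) = x(f(S))$, i.e.\ $y(\psi(S)) = y(S)$, which is the boundary condition in \eqref{proinv2}. Conversely, running every step backwards (the chain-rule identities are equalities, and $f^{-1}$ is again an orientation-preserving diffeomorphism by Lemma~\ref{corofinv}) shows that any solution $y$ of \eqref{proinv2} yields a solution $x = y \circ f^{-1}$ of \eqref{proinv1}.

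I do not expect a serious obstacle: the result is a formal change-of-variables statement and the chain-rule bookkeeping is routine. The one point that needs a little care is the bookkeeping of where each derivative is evaluated — in particular not confusing $f'(s)$ with $f'(\psi(s))$ in the reflected term — and the verification that $f$ maps endpoints to endpoints so that the periodic condition is preserved. The latter is where Lemma~\ref{corofinv} does the real work, since without the intertwining property the transformed equation would not have the clean form with the single involution $\psi$ appearing in both the derivative and the argument.
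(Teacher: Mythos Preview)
Your proposal is correct and follows exactly the same approach as the paper: set $t=f(s)$, $y(s)=x(f(s))$, apply the chain rule together with the intertwining relation $f(\psi(s))=\phi(f(s))$ from Lemma~\ref{corofinv}, and substitute term by term. In fact you give more detail than the paper does, since you explicitly verify the boundary condition via $f(S)=T$, $f(\psi(S))=\phi(T)$, whereas the paper simply states that the substitutions yield \eqref{proinv2} and vice versa.
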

\begin{proof}Consider the change of variable $t=f(s)$ and $y(s):=x(t)=x(f(s))$. Then, using Lemma \ref{corofinv}, it is clear that
$$\frac{\dif y}{\dif s}(s)=\frac{\dif x}{\dif t}(f(s))\frac{\dif f}{\dif s}(s)\quad\text{and}\quad \frac{\dif y}{\dif s}(\psi(s))=\frac{\dif x}{\dif t}(\phi(f(s)))\frac{\dif f}{\dif s}(\psi(s)).$$
Making the proper substitutions in problem \eqref{proinv1} we get problem  \eqref{proinv2} and vice-versa.
\end{proof}
This last results allows us to restrict our study of problem \eqref{proinv1} to the case where $\phi$ is the reflection $\phi(t)=-t$. In the following section we will further restrict our assumptions to the case where $c\equiv0$ in problem \eqref{proinv1}. A comment on how to proceed without this assumption will be done in the Appendix at the end of this work.

\section{Study of the homogeneous equation}
In this section we will study some different cases for the homogeneous equation
\begin{equation}\label{gen-eq} x'(t)+a(t)x(-t)+b(t)x(t)=0,\ t\in I,\end{equation}
where $a,b\in L^1(I)$. In order to solve it, we can consider the decomposition of equation \eqref{gen-eq} used in \cite{Cab4}. For any given function $f$, let $f_e(x):=\frac{f(x)+f(-x)}{2}$ be its even part and $f_o(x):=\frac{f(x)-f(-x)}{2}$ its odd part. Then, the solutions of equation \eqref{gen-eq} satisfy
\begin{align}\label{eq3.1}\begin{pmatrix}x_o' \\ x_e'\end{pmatrix} & =\begin{pmatrix}a_o-b_o & -a_e-b_e \\ a_e-b_e & -a_o-b_o\end{pmatrix}\begin{pmatrix}x_o \\ x_e\end{pmatrix}.
\end{align}
Realize that, a priori, solutions of system \eqref{eq3.1} need not to be pairs of even and  odd functions, nor provide solutions of \eqref{gen-eq}.\par
In order to solve this system, we will restrict problem \eqref{eq3.1} to those cases where the matrix
$$M(t)=\begin{pmatrix}a_o-b_o & -a_e-b_e \\ a_e-b_e & -a_o-b_o\end{pmatrix}(t)$$
satisfies that $[M(t),M(s)]:=M(t)M(s)-M(s)M(t)=0\sfa t,s\in I$, for in that case the solution of the system \eqref{eq3.1} is given by the exponential of the integral of $M$\footnote{See the Appendix for more details on this matter.}. Clearly,
$$[M(t),M(s)]=2 \begin{pmatrix} a_e(t)b_e(s)-a_e(s)b_e(t) & a_o(s)[a_e(t)+b_e(t)]-a_o(t)[a_e(s)+b_e(s)]\\a_o(t)[a_e(s)+b_e(s)]-a_o(s)[a_e(t)+b_e(t)] & a_e(s)b_e(t)-a_e(t)b_e(s)\end{pmatrix}.$$
Let $A(t):=\int_0^t a(s)\dif s$, $B(t):=\int_0^t b(s)\dif s$. Let $\ol M$ be a primitive (save possibly a constant matrix) of $M$. We study now the different cases where $[M(t),M(s)]=0\sfa t,s\in I$. We will always assume $a\not\equiv0$, since the case $a\equiv0$ is the well-known case of an ODE.\par
\textbf{(C1). $b_e=k\,a,\ k\in\bR,\ |k|<1$.}
\par
In this case, $a_o=0$ and $\ol M$ has the form
$$\ol M=\begin{pmatrix}B_e &  -(1+k)A_o\\ (1-k)A_o & -B_e\end{pmatrix}.$$
If we compute the exponential (see note in the Appendix for more information) we get
$$e^{\ol M(t)}=e^{-B_e(t)}\begin{pmatrix}\cos\(\sqrt{1-k^2}A(t)\) & -\frac{1+k}{\sqrt{1-k^2}}\sin\(\sqrt{1-k^2}A(t)\)\\ \frac{\sqrt{1-k^2}}{1+k}\sin\(\sqrt{1-k^2}A(t)\) & \cos\(\sqrt{1-k^2}A(t)\)\end{pmatrix}.$$
Therefore, if a solution to equation \eqref{gen-eq} exists, it has to be of the form
$$u(t)=\a e^{-B_e(t)}\cos\(\sqrt{1-k^2}A(t)\)+\b e^{-B_e(t)}\frac{1+k}{\sqrt{1-k^2}}\sin\(\sqrt{1-k^2}A(t)\).$$
with $\a$, $\b\in\bR$. It is easy to check that all the solutions of equation \eqref{gen-eq} are of this form with $\b=-\a$.
\par
\textbf{(C2). $b_e=k\,a,\ k\in\bR,\ |k|>1$.}
This case is much similar to (C1) and it yields solutions of system \eqref{eq3.1} of the form
$$u(t)=\a e^{-B_e(t)}\cosh\(\sqrt{k^2-1}A(t)\)+\b e^{-B_e(t)}\frac{1+k}{\sqrt{k^2-1}}\sinh\(\sqrt{k^2-1}A(t)\),$$
which are solutions of equation \eqref{gen-eq} when $\b=-\a$.\par
\textbf{(C3). $b_e=a$.}
In this case the solutions of system \eqref{eq3.1} are of the form
\begin{equation}\label{eqc3}u(t)=\a e^{-B_e(t)}+2\b e^{-B_e(t)}A(t)\end{equation}
which are solutions of equation \eqref{gen-eq} when $\b=-\a$.\par
\textbf{(C4). $b_e=-a$.}
In this case the solutions of system \eqref{eq3.1} are the same as in case (C3), but they are solutions of equation \eqref{gen-eq} when $\b=0$.\par
\textbf{(C5). $b_e=a_e=0$.}
In this case the solutions of system \eqref{eq3.1} are of the form
$$u(t)=\a e^{A(t)-B(t)}+\b e^{-A(t)-B(t)},$$
which are solutions of equation \eqref{gen-eq} when $\a=0$.\par

\section{The cases (C1)--(C3) for the complete problem}

In the more complicated setting of the following nonhomogeneous problem
\begin{equation}\label{eq2cp} x'(t)+a(t)\,x(-t)+b(t)\,x(t)=h(t),\nkp a.\,e. t\in I,\quad x(-T)=x(T),
\end{equation}
we have still that, in the cases (C1)--(C3), it can be sorted out very easily. In fact, we get the expression of the Green's function for the operator.
We remark that in the three considered cases along this section the function $a$ must be even on $I$. We note also that $a$ is allowed to change its sign on $I$. 

\par
First, we are going to prove a generalization of Theorem \ref{Greenf}.\par
Consider problem \eqref{eq2cp} with $a$ and $b$ constants.
\begin{equation}\label{eq2cp2} x'(t)+a\,x(-t)+b\,x(t)=h(t),\nkp t\in I,\quad x(-T)=x(T).
\end{equation}
Considering the homogeneous case ($h=0$), differentiating and making proper substitutions, we arrive to the problem.
\begin{equation}\label{eqhog} x''(t)+(a^2-b^2)x(t)=0,\nkp t\in I,\quad x(-T)=x(T),\quad x'(-T)=x'(T).
\end{equation}
Which, for $b^2<a^2$, is the problem of the harmonic oscillator. It was shown in \cite[Proposition 3.1]{Cab4} that, under uniqueness conditions, the Green's function $G$ for problem \eqref{eqhog} satisfies the following properties in the case $b^2<a^2$, but they can be extended almost automatically to the case $b^2>a^2$.
\begin{lem} The Green's function $G$ satisfies the following properties.
\begin{enumerate}
\item $G\in\cC(I^2,\bR)$,
\item $\frac{\partial G}{\partial t}$ and $\frac{\partial^2 G}{\partial t^2}$ exist and are continuous in $\{(t,s)\in I^2\ |\ s\ne t\}$,
\item $\frac{\partial G}{\partial t}(t,t^-)$ and $\frac{\partial G}{\partial t}(t,t^+)$ exist for all $t\in I$ and satisfy
$$\frac{\partial G}{\partial t}(t,t^-)-\frac{\partial G}{\partial t}(t,t^+)=1\sfa t\in I,$$
\item $\frac{\partial^2 G}{\partial t^2}+(a^2-b^2)G=0\text{ in }\{(t,s)\in I^2\ |\ s\ne t\},$
\item \begin{enumerate}
\item $G(T,s)=G(-T,s)\sfa s\in I$,
\item $\frac{\partial G}{\partial t}(T,s)=\frac{\partial G}{\partial t}(-T,s)\sfa s\in(-T,T)$.
\end{enumerate}
\item $G(t,s)=G(s,t)$,
\item $G(t,s)=G(-t,-s)$,
\item $\frac{\partial G}{\partial t}(t,s)=\frac{\partial G}{\partial s}(s,t)$,
\item $\frac{\partial G}{\partial t}(t,s)=-\frac{\partial G}{\partial t}(-t,-s)$,
\item $\frac{\partial G}{\partial t}(t,s)=-\frac{\partial G}{\partial s}(t,s)$.
\end{enumerate}
\end{lem}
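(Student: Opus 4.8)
The plan is to obtain all ten properties from a single explicit construction of $G$ together with two uniqueness arguments. First I would build $G$ by the classical recipe for a second order periodic boundary value problem. Writing $\mu:=a^2-b^2$, the homogeneous equation \eqref{eqhog} is $x''+\mu x=0$, whose fundamental system is $\{\cos(\sqrt\mu\,t),\sin(\sqrt\mu\,t)\}$ when $b^2<a^2$ and $\{\cosh(\sqrt{-\mu}\,t),\sinh(\sqrt{-\mu}\,t)\}$ when $b^2>a^2$. In either case one glues two such solutions across the diagonal $s=t$, fixing the gluing so as to impose the periodic conditions $x(-T)=x(T)$, $x'(-T)=x'(T)$ together with the unit jump of the first derivative at $s=t$. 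This determines $G$ uniquely precisely when \eqref{eqhog} admits no nontrivial periodic solution, i.e.\ away from the resonant values $\sqrt\mu=k\pi/T$; under that non-resonance hypothesis, properties (1)--(5) are exactly the defining relations satisfied by the resulting function. The computation in the hyperbolic case $b^2>a^2$ is formally identical to the trigonometric one treated in \cite{Cab4}, and here the hyperbolic solutions are never periodic, so the non-resonance condition holds automatically; this is why the extension is automatic.

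The symmetry properties (6) and (7) I would get from uniqueness rather than from the explicit formula. For (6), the operator $L:=\dfrac{d^2}{dt^2}+\mu$ is formally self-adjoint and the periodic conditions are self-adjoint boundary conditions, so Lagrange's identity forces $G(t,s)=G(s,t)$. For (7), I would set $\tilde G(t,s):=G(-t,-s)$ and verify that, because the coefficient $\mu$ is constant and both the interval $I$ and the periodic conditions are invariant under $t\mapsto -t$, the function $\tilde G$ satisfies the very same characterization (1)--(5) (using (5a)--(5b) for $G$ at the point $-s$ to check the boundary relations). By the uniqueness just invoked, $\tilde G=G$, which is exactly (7).

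Properties (8) and (9) are then immediate by differentiation, legitimate off the diagonal by (2): differentiating $G(t,s)=G(s,t)$ in $t$ gives (8), and differentiating $G(t,s)=G(-t,-s)$ in $t$, with the chain rule supplying the minus sign, gives (9). For (10) I would use translation invariance, which is the one genuinely new ingredient. Identifying $I$ with the circle $\bR/2T\bZ$, the constant-coefficient problem \eqref{eqhog} is invariant under translations, so $G(t,s)=g(t-s)$ for a single $2T$-periodic function $g$; hence $\dfrac{\partial G}{\partial t}=g'(t-s)=-\dfrac{\partial G}{\partial s}$, which is (10). I would remark that (6), (7), (8), (9) give only the fourfold symmetry $G(t,s)=G(s,t)=G(-t,-s)=G(-s,-t)$ and do \emph{not} by themselves yield (10), so the translation argument (or an equivalent direct computation from the explicit $G$) is genuinely needed.

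The main obstacle is not any single symmetry but the careful verification of (1)--(5), in particular pinning down the exact non-resonance condition on $\mu=a^2-b^2$ under which $G$ exists and is unique, and confirming that the gluing construction and all five defining relations survive verbatim when the trigonometric fundamental system is replaced by the hyperbolic one for $b^2>a^2$. Once existence and uniqueness are secured, the remainder is routine: (6) and (7) are uniqueness arguments, (8)--(9) are differentiations of (6)--(7), and (10) follows from translation invariance.
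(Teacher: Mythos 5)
Your proposal is correct, but note that the paper does not actually prove this lemma: it simply cites \cite[Proposition 3.1]{Cab4} for the case $b^2<a^2$ and asserts that the extension to $b^2>a^2$ is ``almost automatic'', the point being exactly the one you identify --- for $\mu=a^2-b^2<0$ the homogeneous periodic problem has only the trivial solution, so non-resonance is free. Where the cited source verifies properties (1)--(10) by direct inspection of the explicit formula $G(t,s)=\cos\bigl(\omega(|t-s|-T)\bigr)/\bigl(2\omega\sin(\omega T)\bigr)$ (and its hyperbolic analogue), you take a more structural route: (1)--(5) as the defining relations of the glued construction, (6) from formal self-adjointness, (7) from uniqueness applied to $\tilde G(t,s)=G(-t,-s)$, (8)--(9) by differentiation, and (10) from translation invariance on $\bR/2T\bZ$. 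This buys conceptual clarity and makes transparent which properties are forced by which invariance; your observation that (6)--(9) alone do not imply (10) is accurate and worth keeping. The direct-formula approach buys less: one line of computation per item, but no insight. Two small points you should make explicit if you write this up: first, in the uniqueness argument for (7) you must check that $\tilde G$ satisfies the jump condition (3) with the correct sign --- the chain-rule minus sign and the swap of the one-sided limits $s\to t^{\pm}\mapsto -s\to(-t)^{\mp}$ cancel, so it does; second, in (10) the identity $\partial G/\partial t=-\partial G/\partial s$ fails not only on the diagonal but also at the corners $t-s=\pm 2T$ where the periodic extension $g$ has its derivative jump, which is consistent with (5b) being asserted only for $s\in(-T,T)$.
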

With these properties, we can prove the following Theorem (cf. \cite[Proposition 3.2]{Cab4}).
\begin{thm}\label{Greenf2} Suppose that $a^2-b^2 \neq n^2 \, (\pi/T)^2$, $n=0,1,\dots$ Then problem \eqref{eq2cp2} has a unique solution given by the expression
\begin{equation*}
\label{e-u2}
u(t):=\int_{-T}^T\ol G(t,s)h(s)\dif s,
\end{equation*}
where \begin{equation}\label{e-a-b}
\ol{G}(t,s):=a\,G(t,-s)-b\,G(t,s)+\frac{\partial G}{\partial t}(t,s)\end{equation}
is called the \textbf{Green's function} related to problem \eqref{eq2cp2}.
\end{thm}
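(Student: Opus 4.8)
The plan is to obtain existence by direct substitution of the explicit candidate $u$, and uniqueness by reducing the homogeneous first-order problem to the second-order periodic problem \eqref{eqhog}. First I note that the hypothesis $a^2-b^2\neq n^2(\pi/T)^2$ is exactly the condition that $a^2-b^2$ not be an eigenvalue of the periodic harmonic oscillator, so the Green's function $G$ of \eqref{eqhog} exists and satisfies properties (I)--(X) of the preceding Lemma; consequently $\ol G$ and $u$ are well defined.

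For uniqueness I would argue as follows. Suppose $x$ solves the homogeneous problem $x'(t)+a\,x(-t)+b\,x(t)=0$ with $x(-T)=x(T)$. Differentiating this identity gives $x''(t)-a\,x'(-t)+b\,x'(t)=0$, while replacing $t$ by $-t$ in the original equation yields $x'(-t)=-a\,x(t)-b\,x(-t)$. Substituting this together with $x'(t)=-a\,x(-t)-b\,x(t)$ into the differentiated equation eliminates every reflected term and leaves $x''(t)+(a^2-b^2)x(t)=0$. Evaluating the equation at $t=\pm T$ and using $x(-T)=x(T)$ forces $x'(-T)=x'(T)$, so $x$ solves the homogeneous version of \eqref{eqhog}; under the stated hypothesis this has only the trivial solution, whence $x\equiv0$ and the solution of \eqref{eq2cp2}, if it exists, is unique.

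For existence I would verify that $u(t)=\int_{-T}^{T}\ol G(t,s)h(s)\dif s$ satisfies both the equation and the boundary condition. Differentiating the pieces $a\,G(t,-s)$ and $-b\,G(t,s)$ under the integral is routine, since $G$ is Lipschitz in $t$; the delicate piece is $\int_{-T}^{T}\frac{\partial G}{\partial t}(t,s)h(s)\dif s$, whose integrand jumps across the diagonal $s=t$. Here I would split the integral at $s=t$ and apply the Leibniz rule: the two one-sided boundary contributions combine, via the jump relation (III), into precisely $h(t)$, while the interior terms produce $\int_{-T}^{T}\frac{\partial^2 G}{\partial t^2}(t,s)h(s)\dif s$. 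Next, using the parity identities $G(-t,-s)=G(t,s)$ and $G(-t,s)=G(t,-s)$ from property (VII), together with $\frac{\partial G}{\partial t}(-t,s)=-\frac{\partial G}{\partial t}(t,-s)$ from property (IX), I would rewrite $u(-t)$ and then assemble $u'(t)+a\,u(-t)+b\,u(t)$. The terms carrying $G(t,-s)$, $\frac{\partial G}{\partial t}(t,-s)$ and $\frac{\partial G}{\partial t}(t,s)$ cancel in pairs, leaving $h(t)+\int_{-T}^{T}[\frac{\partial^2 G}{\partial t^2}(t,s)+(a^2-b^2)G(t,s)]h(s)\dif s$, and the integral vanishes by property (IV). Finally, the boundary condition $u(-T)=u(T)$ follows immediately from the periodicity relations (V), since they make the integrands defining $u(T)$ and $u(-T)$ coincide almost everywhere.

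The main obstacle is the differentiation through the discontinuity of $\frac{\partial G}{\partial t}$: one must split the integral exactly at $s=t$ and keep careful track of the one-sided limits so that the unit jump in property (III) delivers the source term $h(t)$. Everything else is bookkeeping with the symmetry and parity relations (VI)--(X) and the defining identity (IV) of $G$, together with the observation that the hypothesis simultaneously guarantees the existence of $G$ and the uniqueness just established.
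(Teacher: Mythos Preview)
Your proposal is correct and follows essentially the same route as the paper: uniqueness via reduction of the homogeneous first-order equation to the second-order periodic problem \eqref{eqhog}, and existence by direct substitution of $u$, using the jump relation (III) to produce $h(t)$ and the identities (IV)--(X) to annihilate the remaining integrand. The only cosmetic difference is that the paper splits the integral at both $s=t$ and $s=-t$ (the boundary contributions at $s=-t$ cancel because $\ol G$ is continuous there), whereas you split only at $s=t$; your version is slightly cleaner and you spell out the reduction for uniqueness more explicitly than the paper does.
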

\begin{proof}
Since problem \eqref{eq2cp2}, in the homogeneous case, can be reduced to a problem with the equation of problem \eqref{eqhog}, the classical theory of ODE tells us that problem \eqref{eq2cp2} has at most one solution for all $a^2-b^2 \neq n^2 \, (\pi/T)^2$, $n=0,1,\dots$ Let us see that function $u$ defined in (\ref{e-u}), with $\ol G$ given by \eqref{e-a-b}, fulfills (\ref{eq2cp2}):

\begin{eqnarray*}
& & u'(t)+a\, u(-t)+b\, u(t)=\frac{\dif}{\dif t}\int_{-T}^{-t}\ol G(t,s)h(s)\dif s+\frac{\dif}{\dif t}\int_{-t}^t\ol G(t,s)h(s)\dif s+\frac{\dif}{\dif t}\int_{t}^T\ol G(t,s)h(s)\dif s\\
&+&a\int_{-T}^T\ol G(-t,s)h(s)\dif s+b\int_{-T}^T\ol G(t,s)h(s)\dif s\\
&=&(\ol G(t,t^-)-\ol G(t,t^+))h(t)+\int_{-T}^T\left[a\frac{\partial G}{\partial t}(t,-s)-b\frac{\partial G}{\partial t}(t,s)+\frac{\partial^2 G}{\partial t^2}(t,s)\right]h(s)\dif s\\
&+&a\int_{-T}^T\left[a\,G(-t,-s)-b\,G(-t,s)+\frac{\partial G}{\partial t}(-t,s)\]h(s)\dif s
+b\int_{-T}^T\left[a\,G(t,-s)-b\,G(t,s)+\frac{\partial G}{\partial t}(t,s)\]h(s)\dif s.
\end{eqnarray*}

Using properties $(I)-(X)$, we deduce that this last expression is equal to $h(t)$, so the equation in problem \eqref{eq2cp2} is satisfied. 

Property $(V)$ allows us to verify the boundary conditions.
$$ u(T)-u(-T)=$$
$$\int_{-T}^T\left[a\,G(T,-s)-b\,G(T,s)+\frac{\partial G}{\partial t}(T,s)-a\,G(-T,-s)+b\,G(-T,s)-\frac{\partial G}{\partial t}(-T,s)\right]h(s)\dif s=0.$$
\end{proof}
This last theorem leads us to the question ``Which is the Green's function for the case (C3) with $a,b$ constants?". The following Lemma answers that question.
\begin{lem}\label{lemGc3}Let $a\ne 0$ be a constant and let $G_{C3}$ be a real function defined as
$$G_{C3}(t,s):=\frac{t-s}{2}-a\,s\,t+\begin{cases} -\frac{1}{2}+a\,s & \text{ if } |s|<t, \\ \frac{1}{2}-a\,s & \text{ if } |s|<-t, \\ \frac{1}{2}+a\,t & \text{ if } |t|<s, \\ -\frac{1}{2}-a\,t & \text{ if } |t|<-s.\end{cases}$$
Then the following properties hold.
\begin{itemize}
\item $\frac{\partial G_{C3}}{\partial t}(t,s)+a(G_{C3}(t,s)+G_{C3}(-t,s))=0$ for a.\,e. $t,s\in (-1,1)$.
\item $\frac{\partial G_{C3}}{\partial t}(t,t^+)-\frac{\partial G_{C3}}{\partial t}(t,t^-)=1\sfa t\in(-1,1)$.
\item $G_{C3}(-1,s)=G_{C3}(1,s)\sfa s\in(-1,1)$.
\end{itemize}
\end{lem}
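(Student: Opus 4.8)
The three assertions are all of the same nature---pointwise identities for the explicitly given kernel $G_{C3}$---so the plan is to verify each by a direct computation organised around the geometry of the piecewise definition. The two lines $s=t$ and $s=-t$ cut the square $(-1,1)^2$ into four open triangular regions: the right region $R=\{|s|<t\}$, the left region $L=\{|s|<-t\}$, the top region $U=\{|t|<s\}$ and the bottom region $D=\{|t|<-s\}$. On each of these, $G_{C3}$ equals the sum of the common term $\tfrac{t-s}{2}-a\,s\,t$ and one affine expression, hence is a polynomial in $(t,s)$ and smooth there; the complement (the two diagonals) is Lebesgue-null, which is precisely the exceptional set permitted in the first two statements.

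For the differential identity I would first record $\partial_t G_{C3}$ on each of the four regions, which is immediate differentiation of a polynomial. The only point requiring care is the reflected term $G_{C3}(-t,s)$: one checks that $t\mapsto -t$ interchanges $R$ and $L$ while preserving each of $U$ and $D$, so that on $R$, for instance, one must evaluate $G_{C3}(-t,s)$ using the $L$-piece, and symmetrically elsewhere. Substituting the correct branch into $\partial_t G_{C3}(t,s)+a\bigl(G_{C3}(t,s)+G_{C3}(-t,s)\bigr)$ reduces it to a polynomial in $(t,s)$ on each region, which one then simplifies and sees to vanish; carrying this out on all four regions gives the first bullet for a.e.\ $(t,s)$. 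For the jump relation I would fix $t$ and compute the two one-sided limits of $\partial_t G_{C3}(t,s)$ as $s\to t^{\pm}$, distinguishing $t>0$ (where $s\to t^{-}$ sits in $R$ and $s\to t^{+}$ in $U$) from $t<0$ (where the neighbouring branches are $D$ and $L$), reading the derivative off the two adjacent pieces and subtracting. Finally, for the boundary condition one simply evaluates the formula at $t=1$ (which, since $|s|<1$, lies in $R$) and at $t=-1$ (which lies in $L$) and checks that the two resulting expressions in $s$ coincide.

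The computations themselves are elementary; the one genuine obstacle is the bookkeeping forced by the nonlocality of the equation. Because $G_{C3}(-t,s)$ is governed by a different branch of the definition than $G_{C3}(t,s)$, one must track region by region exactly which piece controls each term, and it is the cancellation between the common quadratic part and the region-dependent affine corrections that produces both the vanishing in the first identity and the prescribed jump in the second. Once the region dictionary $R\leftrightarrow L$, $U\to U$, $D\to D$ is in hand, each of the three verifications is a short and routine calculation.
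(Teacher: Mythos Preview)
Your proposal is correct and matches the paper's approach exactly: the paper simply states that ``these properties are straightforward to check'' and gives no further argument, so your plan of verifying each identity by direct computation on the four triangular regions---together with the correct bookkeeping that $t\mapsto -t$ swaps $R\leftrightarrow L$ and fixes $U,D$---is precisely the intended verification, only spelled out in more detail than the paper bothers to.
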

These properties are straightforward to check.\ Clearly, $G_{C3}$ is the Green's function for the problem
$$x'(t)+a[x(t)+x(-t)]=h(t), t\in[-1,1];\quad x(1)=x(-1),$$
that is, the Green's function for the case (C3) with $a,b$ constants and $T=1$. For other values of $T$, it is enough to make a change of variables.
\begin{rem} The function $G_{C3}$ can be obtained from the Green's functions for the case $(C1)$ with $a$ constant, $b_o\equiv0$ and $T=1$ taking the limit $k\to 1^-$ for $T=1$.
\end{rem}

The following theorem shows how to obtain a Green's function for non constant coefficients of the equation using the Green's function for constant coefficients. We can find the same principle, that is, to compose a Green's function with some other function in order to obtain a new Green's function, in \cite[Theorem 5.1, Remark 5.1]{Cab6} and also in \cite[Section 2]{Gau}.\par
But first, we need to now hot the Green's function should be defined in such a case. Theorem \ref{Greenf2} gives us the expression of the Green's function for problem \eqref{eq2cp2}, $\ol{G}(t,s):=a\,G(t,-s)-b\,G(t,s)+\frac{\partial G}{\partial t}(t,s)$. For instance, in the case (C1), if $\omega=\sqrt{a^2-b^2}$,
$$2\omega\sin(\omega T)\ol{G}(t,s):=\begin{cases} a \cos[\omega (s + t - T)]+ b \cos[\omega (s - t + T)] + \omega \sin[\omega (s - t + T)],& t>|s|,\\
a\cos[\omega (s + t - T)] +b \cos[\omega (-s + t + T)] - \omega \sin[\omega (-s + t + T)],   & s>|t|,\\
a \cos[\omega (s + t + T)] +b \cos[\omega (-s + t + T)] - \omega \sin[\omega (-s + t + T)], & -t>|s|,\\
a \cos[\omega (s + t + T)] +b \cos[\omega (s - t + T)] + \omega \sin[\omega (s - t + T)],  & -s>|t|. \end{cases}$$
Also, observe that $\ol G$ is continuous except at the diagonal, where $\ol G(t,t^-)-\ol G(t,t^+)=1$.

Similarly, we can obtain the explicit expression of the Green's function $\ol G$ for the cases (C2) and (C3) (see Lemma \ref{lemGc3}). In any case, we have that the Green's function for problem \eqref{eq2cp2} can be expressed as
$$2\omega\sin(\omega T)\ol{G}(t,s):=\begin{cases} \ol G_1(t,s),& t>|s|,\\
\ol G_2(t,s),   & s>|t|,\\
\ol G_3(t,s), & -t>|s|,\\
\ol G_4(t,s),  & -s>|t|, \end{cases}$$
were the $\ol G_j$, $j=1,\dots,4$ are analytic functions defined on $\bR^2$. 

In order to simplify the statement of the following Theorem, consider the following conditions.\par
$\mathbf{(C1^*)}$. (C1) is satisfied, $(1-k^2)A(T)^2\neq (n \, \pi)^2$ for all $n=0,1,\dots$ and $\cos\(\sqrt{1-k^2}A(T)\)\ne0$.\par
$\mathbf{(C2^*)}$. (C2) is satisfied and $(1-k^2)A(T)^2\neq (n \, \pi)^2$  for all $n=0,1,\dots$\par
$\mathbf{(C3^*)}$. (C3) is satisfied and $A(T)\ne0$.\par
Assume one of $(C1^*)$--$(C3^*)$. In that case, by Theorem \ref{Greenf2} and Lemma \ref{lemGc3}, we are under uniqueness conditions for the solution for the following problem \cite{Cab4}.
\begin{equation}\label{eq2} x'(t)+x(-t)+k\,x(t)=h(t),\nkp t\in [-|A(T)|,|A(T)|],\quad  x(A(T))=x(-A(T)).
\end{equation}

The Green's function $G_2$ for problem \eqref{eq2} is just an specific case of $\ol G$ and can be expressed as
$$\ol{G_2}(t,s):=\begin{cases} k_1(t,s),& t>|s|,\\
k_2(t,s),   & s>|t|,\\
k_3(t,s), & -t>|s|,\\
k_4(t,s),  & -s>|t|. \end{cases}$$
Define now 
\begin{equation}\label{Ggenral}G_1(t,s):=e^{B_e(s)-B_e(t)}H(t,s)=e^{B_e(s)-B_e(t)}\begin{cases} k_1(A(t),A(s)),& t>|s|,\\
k_2(A(t),A(s)),   & s>|t|,\\
k_3(A(t),A(s)), & -t>|s|,\\
k_4(A(t),A(s)),  & -s>|t|. \end{cases}\end{equation}
Defined this way, $G_1$ is continuous except at the diagonal, where $G_1(t,t^-)-\ol G_1(t,t^+)=1$. Now we can state the following Theorem.
\begin{thm}\label{thmcases123}
Assume one of $(C1^*)$--$(C2^*)$. Let $G_1$ be defined as in \eqref{Ggenral}. Assume $G_1(t,\cdot)h(\cdot)\in L^1(I)$ for every $t\in I$. Then problem \eqref{eq2cp} has a unique solution given by
$$u(t)=\int_{-T}^TG_1(t,s)h(s)\dif s.$$
\end{thm}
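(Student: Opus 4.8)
The plan is to treat $G_1$ as a reparametrised and rescaled copy of the constant-coefficient kernel $\ol{G_2}$ of problem \eqref{eq2}, and to verify directly that $u(t)=\int_{-T}^{T}G_1(t,s)h(s)\,\dif s$ solves \eqref{eq2cp}. The transformation to keep in mind is $x(t)=e^{-B_e(t)}y(A(t))$: using $B_e'=b_o$ (so $B_e(t)=\int_0^t b_o(r)\,\dif r$), $A'=a$, $A(-t)=-A(t)$, and $b-b_o=b_e=k\,a$, a one-line computation turns the left-hand side of \eqref{eq2cp} into $a(t)\,e^{-B_e(t)}\big[y'(A(t))+y(-A(t))+k\,y(A(t))\big]$, i.e.\ into the operator of \eqref{eq2} evaluated at $A(t)$. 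This both explains the shape of \eqref{Ggenral} and tells me to carry out the verification at the level of the kernel, so as not to assume that $A$ is injective.

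First I would establish the three structural properties of $G_1$ by pulling back the corresponding properties of $\ol{G_2}$ (which exists and is unique under $(C1^*)$--$(C2^*)$ by Theorem \ref{Greenf2}). Because each of the four regions in \eqref{Ggenral} is cut in the $(t,s)$-plane and carries a single branch $k_j(A(t),A(s))$, the kernel is continuous in the interior of every region, so the only candidate discontinuities lie on $s=\pm t$. On the antidiagonal $s=-t$ one has $A(s)=-A(t)$, where $\ol{G_2}$ is continuous, hence $G_1$ is too. On the diagonal, for $t>0$ the one-sided limits $s\to t^-,\,t^+$ select $k_1,k_2$, and for $t<0$ they select $k_4,k_3$; the identities $k_1(\tau,\tau)-k_2(\tau,\tau)\equiv1$ and $k_4(\tau,\tau)-k_3(\tau,\tau)\equiv1$ (true on a half-line by the jump of $\ol{G_2}$, hence everywhere by analyticity) give $G_1(t,t^-)-G_1(t,t^+)=1$ for every $t$. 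The periodicity $G_1(T,s)=G_1(-T,s)$ reduces, after cancelling the exponential factor and using $A(-T)=-A(T)$, to $k_1(A(T),A(s))=k_3(-A(T),A(s))$, which is the boundary identity for $\ol{G_2}$.

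Next I would check that for each fixed $s$ the map $t\mapsto G_1(t,s)$ satisfies $\partial_t G_1(t,s)+a(t)G_1(-t,s)+b(t)G_1(t,s)=0$ for a.e.\ $t$. Differentiating $G_1=e^{B_e(s)-B_e(t)}k_j(A(t),A(s))$ inside a region and using $B_e'=b_o$, $A'=a$, $b_e=k\,a$, the expression collapses to $a(t)\,e^{B_e(s)-B_e(t)}$ times the homogeneous equation for $\ol{G_2}$ at $(A(t),A(s))$; here one uses that $t\mapsto-t$ sends region $1\leftrightarrow3$ and fixes regions $2,4$, matching the reflection structure of $\ol{G_2}$, so that the term $G_1(-t,s)$ supplies exactly $\ol{G_2}(-\tau,\sigma)$. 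The bracket vanishes wherever $A(t)\ne\pm A(s)$, and on the exceptional set either $a(t)=0$ (killing the prefactor) or the set is $t$-null, so the identity holds almost everywhere. Granting this, the proof finishes as in Theorem \ref{Greenf2}: splitting $u(t)=\int_{-T}^{T}G_1(t,s)h(s)\,\dif s$ at $s=\pm t$ and differentiating by Leibniz's rule, the antidiagonal contributions cancel by continuity, the diagonal jump produces $+h(t)$, and the remaining integral vanishes, whence $u'(t)+a(t)u(-t)+b(t)u(t)=h(t)$ a.e.; the boundary condition follows from $G_1(T,s)=G_1(-T,s)$, and uniqueness from the explicit homogeneous solutions of Section 3 (cases (C1)--(C2)), which under $(C1^*)$--$(C2^*)$ force $\alpha=0$.

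The step I expect to be the main obstacle is controlling the jump and continuity of $G_1$ when $a$ changes sign. The naive kernel $\ol{G_2}(A(t),A(s))$ would pick up a jump of the wrong sign wherever $a<0$ and spurious jumps wherever the non-monotone $A$ satisfies $A(s)=A(t)$ with $s\ne t$; definition \eqref{Ggenral} is designed to avoid both, precisely because the regions are cut in $(t,s)$ and each region commits to one analytic branch. Verifying rigorously that the one-sided diagonal limits always pair $(k_1,k_2)$ or $(k_4,k_3)$---so that the jump is $+1$ independently of $\sign a(t)$---and that no discontinuity survives inside a region even where $A(t)=A(s)$, is the heart of the matter; once it is in place, the remaining steps are routine reparametrisations of the constant-coefficient computation underlying Theorem \ref{Greenf2}.
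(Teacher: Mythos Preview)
Your proposal is correct and follows essentially the same route as the paper: both verify that $G_1$ inherits from $\ol{G_2}$ the kernel identity $\partial_t G_1+a(t)G_1(-t,s)+b(t)G_1(t,s)=0$ (equivalently, the paper's identity $\partial_t H+a(t)H(-t,s)+k\,a(t)H(t,s)=0$), the diagonal jump $G_1(t,t^-)-G_1(t,t^+)=1$, and the periodicity $G_1(T,s)=G_1(-T,s)$; then apply Leibniz's rule and invoke the explicit homogeneous solutions of Section~3 for uniqueness. Your treatment of the sign-changing $a$ case---using that the regions in \eqref{Ggenral} are cut in $(t,s)$ so that the diagonal always pairs $(k_1,k_2)$ or $(k_4,k_3)$, and extending the jump identity to all of $\bR$ by analyticity---is exactly the mechanism behind the paper's remark that $A$ need not be injective, only spelled out more carefully than the paper does.
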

\begin{proof}
First realize that, since $a$ is even, $A$ is odd, so $A(-t)=-A(t)$. It is important to note that if $a$ has not constant sign in $I$, then $A$ may be not injective on $I$.

From the properties of $\bar G_2$ as a Green's function, it is clear that
$$\frac{\partial \bar G_2}{\partial t}(t,s)+\bar G_2(-t,s)+k\,\bar G_2(t,s)=0\quad\text{for a.\,e. }t,s\in A(I),$$
and so,
$$\frac{\partial H}{\partial t}(t,s)+a(t)H(-t,s)+ka(t)\,H(t,s)=0\quad\text{for a.\,e. }t,s\in I,$$
Hence 
\begin{align*}
& u'  (t)+a(t)\, u(-t)+(b_o(t)+k\,a(t))\, u(t)= \frac{\dif}{\dif t}\int_{-T}^TG_1(t,s)h(s)\dif s+a(t)\int_{-T}^TG_1(-t,s)h(s)\dif s\\
&\quad+(b_o(t)+k\,a(t))\int_{-T}^TG_1(t,s)h(s)\dif s\\
=\ &\frac{\dif}{\dif t}\int_{-T}^{t}  e^{B_e(s)-B_e(t)}H(t,s)h(s)\dif s+\frac{\dif}{\dif t}\int_{t}^T e^{B_e(s)-B_e(t)} H(t,s)h(s)\dif s\\&\quad+ a(t)\int_{-T}^T e^{B_e(s)-B_e(t)}H(-t,s)h(s)\dif s+(b_o(t)+k\,a(t))\int_{-T}^T e^{B_e(s)-B_e(t)}H(t,s)h(s)\dif s\\
=\ &[H(t,t^-)-H(t,t^+)]h(t)+a(t)\, e^{-B_e(t)}\int_{-T}^Te^{B_e(s)}\frac{\partial H}{\partial t}(t,s)h(s)\dif s\\&\quad- b_o(t)e^{-B_e(t)}\int_{-T}^Te^{B_e(s)}H(t,s)h(s)\dif s+ a(t)e^{-B_e(t)}\int_{-T}^Te^{B_e(s)} H(-t,s)h(s)\dif s\\ &\quad+ (b_o(t)+k\,a(t))e^{-B_e(t)}\int_{-T}^Te^{B_e(s)}H(t,s)h(s)\dif s\\ =\ & h(t)+\, a(t)e^{-B_e(t)}\int_{-T}^Te^{B_e(s)}\[\frac{\partial H}{\partial t}(t,s)+a(t)H(-t,s)+ka(t)\,H(t,s)\]h(s)\dif s=h(t).
\end{align*}

The boundary conditions are also satisfied.
$$u(T)-u(-T)= e^{-B_e(T)}\int_{-T}^Te^{B_e(s)} [H(T,s)-H(-T,s)]h(s)\dif s=0.$$
In order to check the uniqueness of solution, let $u$ and $v$ be solutions of problem \eqref{eq2}. Then $u-v$ satisfies equation \eqref{gen-eq} and so is of the form given when we first studied the cases $(C1^*)$--$(C3^*)$ (see Section 3). Also, $(u-v)(T)-(u-v)(-T)=2(u-v)_o(T)=0$, but this can only happen, by what has been imposed by conditions $(C1^*)$--$(C3^*)$, if $u-v\equiv0$, thus proving the uniqueness of solution.
\end{proof}

\begin{exa}
Consider the problem
$$x'(t)=\cos(\pi t)x(-t)+\sinh(t)x(t)=\cos(\pi t)+\sinh(t), \;x(3/2)=x(-3/2). $$
Clearly we are in the case (C1). If we compute the Green's function according to Theorem \ref{thmcases123} we obtain

 $$2\sin(\sin (\pi T))G_1(t,s)=e^{\cosh (s)-\cosh (t)}\begin{cases}
  \sin \left(\frac{\sin (\pi s)}{\pi }-\frac{\sin (\pi t)}{\pi }-\frac{\sin (\pi T)}{\pi }\right)+ \cos \left(\frac{\sin (\pi s)}{\pi }+\frac{\sin (\pi t)}{\pi }-\frac{\sin (\pi T)}{\pi }\right), |t|<s,\\
    \sin \left(\frac{\sin (\pi s)}{\pi}-\frac{\sin (\pi t)}{\pi }+\frac{\sin (\pi T)}{\pi }\right)+\cos \left(\frac{\sin (\pi s)}{\pi }+\frac{\sin (\pi t)}{\pi }+\frac{\sin (\pi T)}{\pi }\right), |t|<-s,\\
  \sin \left(\frac{\sin (\pi s)}{\pi}-\frac{\sin (\pi t)}{\pi }+\frac{\sin (\pi T)}{\pi }\right)+ \cos \left(\frac{\sin (\pi s)}{\pi }+\frac{\sin (\pi t)}{\pi }-\frac{\sin (\pi T)}{\pi }\right), |s|<t,\\
  \sin \left(\frac{\sin (\pi s)}{\pi }-\frac{\sin (\pi t)}{\pi }-\frac{\sin (\pi T)}{\pi }\right) + \cos \left(\frac{\sin (\pi s)}{\pi }+\frac{\sin (\pi t)}{\pi }+\frac{\sin (\pi T)}{\pi }\right), |s|<-t.\end{cases}$$
  \begin{figure}[hhht]\label{figure2g}
  \center{\includegraphics[width=.5\textwidth]{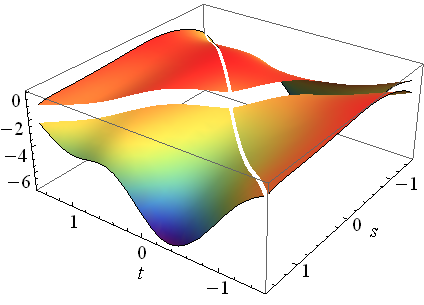}\includegraphics[width=.5\textwidth]{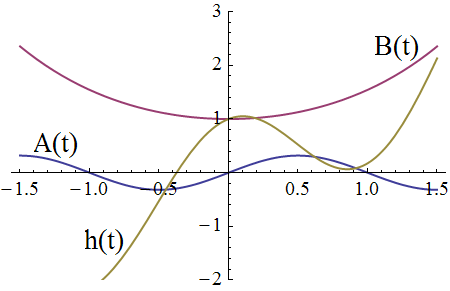}}\caption{Graphs of the kernel \textit{(left)} and of the functions involved in the problem \textit{(right)}.}
  \end{figure}
\end{exa}
One of the most important direct consequences of Theorem \ref{thmcases123} is the existence of maximum and antimaximum principles in the case $b\equiv0$\footnote{Note that this discards the case (C3), for which $b\equiv0$ implies $a\equiv 0$, because we are assuming $a\not\equiv0$.}. To show this and what happens in the case $b$ constant, $b\ne0$, we recall here a couple results from \cite{Cab4}.
\begin{thm}[{\cite[Theorem 4.3]{Cab4}}]\label{alphasign}Let $b=0$, $\a=aT$. \par
\begin{itemize}
\item If $\a\in(0,\frac{\pi}{4})$ then $\ol G$ is strictly positive on $I^2$.
\item If $\a\in(-\frac{\pi}{4},0)$ then $\ol G$ is strictly negative on $I^2$.
\item If $\a=\frac{\pi}{4}$ then $\ol G$ vanishes on $P:=\{(-T,-T),(0,0),(T,T),(T,-T)\}$ and is strictly positive on $(I^2)\backslash P$.
\item If $\a=-\frac{\pi}{4}$ then $\ol G$ vanishes on $P$ and is strictly negative on $(I^2)\backslash P$.
\item If $\a\in\bR\backslash[-\frac{\pi}{4},\frac{\pi}{4}]$ then $\ol G$ is not positive nor negative on $I^2$.
\end{itemize}
\end{thm}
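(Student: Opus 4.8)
The plan is to reduce the whole statement to the explicit piecewise expression for $\ol G$ recorded earlier and then to read off the sign from a trigonometric factorization. Putting $b=0$ places us in case (C1) with $k=0$, so $\omega=\sqrt{a^2-b^2}=|a|$, and the displayed formula for case (C1) specializes. For $a>0$ the common factor $a=\omega$ cancels against the $2\omega$ on the left, and applying the sum-to-product identities $\cos X\pm\sin Y=2\cos\!\big(\tfrac{X\mp Y}{2}\pm\tfrac\pi4\big)\cos\!\big(\tfrac{X\pm Y}{2}\mp\tfrac\pi4\big)$ to each branch I would obtain the factorized form
$$\sin(aT)\,\ol G(t,s)=
\begin{cases}
\cos\(a(t-T)+\tfrac\pi4\)\,\cos\(as-\tfrac\pi4\), & t>|s|,\\
\cos\(at+\tfrac\pi4\)\,\cos\(a(s-T)-\tfrac\pi4\), & s>|t|,\\
\cos\(a(t+T)+\tfrac\pi4\)\,\cos\(as-\tfrac\pi4\), & -t>|s|,\\
\cos\(at+\tfrac\pi4\)\,\cos\(a(s+T)-\tfrac\pi4\), & -s>|t|.
\end{cases}$$
Each branch is thus a product of one factor depending on $t$ and one depending on $s$, which is exactly what makes the sign transparent. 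The negative-coefficient case I would not redo by hand: the substitution $y(t)=x(-t)$ carries the operator $x'+a\,x(-t)$ into $y'-a\,y(-t)$ with data $-h(-t)$ and preserves the periodic condition, whence $\ol G_{-a}(t,s)=-\ol G_a(-t,-s)$. So every sign statement for $\a<0$ follows from the corresponding one for $\a>0$ by reflecting through the origin and changing sign.

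For $\a=aT\in(0,\tfrac\pi4)$ I would verify branch by branch that every argument lands inside $(-\tfrac\pi2,\tfrac\pi2)$. In the first branch, for instance, $t\in(0,T]$ gives $a(t-T)+\tfrac\pi4\in(-\a+\tfrac\pi4,\tfrac\pi4]\subset(0,\tfrac\pi2)$, while $|as|\le at\le\a<\tfrac\pi4$ gives $as-\tfrac\pi4\in(-\tfrac\pi2,0)$; the three remaining branches are entirely analogous. Hence both cosines are strictly positive on each open region, and since $\sin(aT)=\sin\a>0$ we conclude $\ol G>0$ throughout $I^2$. The statement for $\a\in(-\tfrac\pi4,0)$ is then immediate from the reflection symmetry, which turns strict positivity into strict negativity.

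For the threshold $\a=\tfrac\pi4$ the same interval bounds become closed, so each cosine stays nonnegative and can vanish only when its argument reaches $\pm\tfrac\pi2$; tracking exactly when this happens across the four branches, the equalities occur only at the corner/center points, and I would check that the vanishing is realized precisely at the four points of $P$ and nowhere else, with strict positivity on $I^2\setminus P$. The case $\a=-\tfrac\pi4$ again follows by reflection. Finally, for $|\a|>\tfrac\pi4$ the argument of an $s$-factor, say $as-\tfrac\pi4$ in the first branch, ranges over an interval of length $2\a>\tfrac\pi2$ and so crosses a zero of cosine in the interior; fixing $t$ so that the companion $t$-factor stays nonzero and moving $s$ across that crossing exhibits two points of opposite sign, proving $\ol G$ is neither nonnegative nor nonpositive.

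The hard part will be the bookkeeping in the threshold cases $\a=\pm\tfrac\pi4$. Because $\ol G$ carries the unit jump across the diagonal $s=t$, its values at the corners of $I^2$ and at the origin are genuinely one-sided, so for each point of the exceptional set one must determine which branch of the piecewise formula governs the relevant one-sided limit and confirm that the cosine in question really attains zero there while everything else stays strictly signed. One subtlety worth flagging is that the reflection $\ol G_{-a}(t,s)=-\ol G_a(-t,-s)$ sends the anti-diagonal zero $(T,-T)$ arising at $\a=\tfrac\pi4$ to $(-T,T)$, the other three points of $P$ being reflection-invariant; so for $\a=-\tfrac\pi4$ the vanishing set is the reflected copy $-P$, and one should read the statement with that identification in mind.
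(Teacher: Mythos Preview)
The paper does not prove this theorem at all: it is quoted verbatim from \cite{Cab4} as a result to be used, with no accompanying argument. So there is no ``paper's own proof'' to compare your attempt against.

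That said, your approach is the natural one and is essentially correct as a sketch. Specializing the displayed (C1) formula for $\ol G$ to $b=0$, $\omega=a>0$ and then applying the sum-to-product identity to write each branch of $\sin(aT)\ol G$ as a product of two cosines is exactly what makes the sign analysis tractable; your computed factorization for the first two branches checks out, and the remaining two follow by the obvious symmetry. The reflection identity $\ol G_{-a}(t,s)=-\ol G_a(-t,-s)$ that you derive from the substitution $y(t)=x(-t)$ is correct and does reduce the negative-$\a$ items to the positive ones.

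Two points. First, your honesty about the threshold bookkeeping is warranted: the three diagonal points of $P$ sit on the jump of $\ol G$, so one must fix a convention (one-sided value) before the statement ``vanishes on $P$'' even makes sense; this is not difficult but has to be done carefully and consistently across branches. Second, the discrepancy you flag at $\a=-\tfrac\pi4$ is real: your reflection sends $(T,-T)$ to $(-T,T)$, so the vanishing set there is $-P$ rather than $P$ as literally stated. This is a wrinkle in the quoted statement, not in your argument.
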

\begin{cor}[{\cite[Corollary 4.4]{Cab4}}]\label{coralphasign} Let $\cF_\l(I)$ be the set of real differentiable functions $f$ defined on $I$ such that $f(-T)-f(T)=\l$. The operator $R_a:\cF_\l(I)\to L^1(I)$ defined as $R_a(x(t))=x'(t)+a\, x(-t)$, with $a\in\bR\backslash\{0\}$, satisfies
\begin{itemize}
\item $R_m$ is strongly inverse positive if and only if $a\in(0,\frac{\pi}{4T}]$ and $\l\ge0$,
\item $R_m$ is strongly inverse negative if and only if $a\in[-\frac{\pi}{4T},0)$ and $\l\ge0$.

\end{itemize}
\end{cor}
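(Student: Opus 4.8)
The plan is to reduce the statement to the explicit solution representation for $R_a$ and then extract all sign information from Theorem~\ref{alphasign}. Since $b\equiv0$ and $a\neq0$, the homogeneous equation $v'(t)+a\,v(-t)=0$ falls under case~(C1) with $k=0$ and its solution space is one dimensional, spanned by $v_0(t):=\cos(at)-\sin(at)$; a direct substitution confirms $R_a(v_0)=0$, and one computes $v_0(-T)-v_0(T)=2\sin(aT)$. Under the uniqueness hypothesis $aT\notin\pi\bZ$ this quantity is nonzero, so $\frac{\l}{2\sin(aT)}v_0$ is the unique homogeneous solution lying in $\cF_\l(I)$. Combining it with the particular solution furnished by the Green's function of Theorem~\ref{Greenf2} (taken with $b=0$), the unique solution of $R_a(x)=h$ in $\cF_\l(I)$ is
\begin{equation*}
x(t)=\int_{-T}^{T}\ol G(t,s)\,h(s)\,\dif s+\frac{\l}{2\sin(aT)}\bigl(\cos(at)-\sin(at)\bigr).
\end{equation*}

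For sufficiency I would treat the two summands separately. If $a\in(0,\frac{\pi}{4T})$, Theorem~\ref{alphasign} gives $\ol G>0$ on $I^2$, so the integral term is nonnegative when $h\ge0$ and strictly positive unless $h\equiv0$. Writing $\cos(at)-\sin(at)=\sqrt2\,\cos(at+\frac{\pi}{4})$ and noting that on $I$ the argument $at+\frac{\pi}{4}$ ranges over $[\frac{\pi}{4}-aT,\frac{\pi}{4}+aT]\subset(0,\frac{\pi}{2})$, this factor is strictly positive; since $\sin(aT)>0$ for $aT\in(0,\frac{\pi}{4})$, the second term is nonnegative, and strictly positive when $\l>0$. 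Hence $x>0$ whenever $h\ge0$, $\l\ge0$ and the data do not both vanish, which is the asserted strong inverse positivity.

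For necessity I argue by contraposition, producing counterexamples from the remaining cases of Theorem~\ref{alphasign}. If $a>\frac{\pi}{4T}$ or $a<-\frac{\pi}{4T}$ then $aT\notin[-\frac\pi4,\frac\pi4]$ and $\ol G$ takes both signs on $I^2$; choosing $\l=0$ and a nonnegative $h$ supported where $\ol G<0$ yields a solution that is negative somewhere. If $a\in(-\frac{\pi}{4T},0)$ then $\ol G<0$, so again $\l=0$ together with $h\ge0$, $h\not\equiv0$ gives $x<0$. Finally, if $a\in(0,\frac{\pi}{4T}]$ but $\l<0$, then $h\equiv0$ already produces $x=\frac{\l}{2\sin(aT)}v_0<0$ on the interior. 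Thus strong inverse positivity forces $a\in(0,\frac{\pi}{4T}]$ and $\l\ge0$. The strongly inverse negative statement is obtained by the symmetric argument, invoking the bullets of Theorem~\ref{alphasign} for negative $a$, where $\ol G<0$ and $\sin(aT)<0$ reverse both signs.

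The main obstacle is the borderline value $a=\frac{\pi}{4T}$, for which neither summand is strictly positive throughout $I^2$: here $\ol G$ vanishes on the finite set $P$ while $\cos(at+\frac{\pi}{4})$ vanishes only at $t=T$. Establishing strict positivity of $x$ in this case requires checking that these two vanishing loci are never activated simultaneously, so that at every point at least one summand contributes a strictly positive amount. A secondary subtlety is fixing the precise meaning of \emph{strongly} inverse positive and confirming that each counterexample above violates that stronger notion and not merely the non-strict one.
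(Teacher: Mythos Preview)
The paper does not give its own proof of this corollary: it is quoted verbatim from \cite[Corollary~4.4]{Cab4} and stated without argument, serving only as input for Corollary~\ref{corsigng}. So there is no proof in the present paper to compare your attempt against.

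That said, your reconstruction is the natural one and matches the spirit of how the result is derived in the cited reference: write the general solution in $\cF_\l(I)$ as the Green's-function integral plus the unique homogeneous correction $\frac{\l}{2\sin(aT)}v_0$, then read off the sign from Theorem~\ref{alphasign} and the explicit form of $v_0$. Your sufficiency and necessity arguments for the open range $a\in(0,\frac{\pi}{4T})$ are clean. The two loose ends you flag are genuine: at the borderline $a=\frac{\pi}{4T}$ one must argue that the zeros of $\ol G(\cdot,\cdot)$ on $P$ and the single zero of $v_0$ at $t=T$ never conspire to kill $x$ on the interior (they do not, since $\ol G(T,\cdot)$ vanishes only at the two endpoints $s=\pm T$, a null set), and the precise meaning of \emph{strongly} inverse positive must be fixed to decide whether positivity is asserted on the open or closed interval. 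One small omission in your necessity case split: you cover $|a|>\frac{\pi}{4T}$ and $a\in(-\frac{\pi}{4T},0)$ but not the endpoint $a=-\frac{\pi}{4T}$; there $\ol G\le 0$ with zeros only on $P$, and the same $\l=0$, $h\ge0$ counterexample works.
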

With these results  we get the following corollary to Theorem \ref{thmcases123}.
\begin{cor}\label{corsigng}Under the conditions of Theorem \ref{thmcases123}, if $a$ is nonnegative on $I$ and $b=0$,\par
\begin{itemize}
\item If $A(T)\in(0,\frac{\pi}{4})$ then $G_1$ is strictly positive on $I^2$.
\item If $A(T)\in(-\frac{\pi}{4},0)$ then $G_1$ is strictly negative on $I^2$.
\item If $A(T)=\frac{\pi}{4}$ then $G_1$ vanishes on $P:=\{(-A(T),-A(T)),(0,0),(A(T),A(T)),( A(T),- A(T))\}$ and is strictly positive on $(I^2)\backslash P$.
\item If $A(T)=-\frac{\pi}{4}$ then $G_1$ vanishes on $P$ and is strictly negative on $(I^2)\backslash P$.
\item If $A(T)\in\bR\backslash[-\frac{\pi}{4},\frac{\pi}{4}]$ then $G_1$ is not positive nor negative on $I^2$.
\end{itemize}
Furthermore, the operator $R_a:\cF_\l(I)\to L^1(I)$ defined as $R_a(x(t))=x'(t)+a(t)\, x(-t)$ satisfies
\begin{itemize}
\item $R_a$ is strongly inverse positive if and only if $A(T)\in(0,\frac{\pi}{4T}]$ and $\l\ge0$,
\item $R_a$ is strongly inverse negative if and only if $A(T)\in[-\frac{\pi}{4T},0)$ and $\l\ge0$.

\end{itemize}
\end{cor}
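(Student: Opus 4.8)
The plan is to reduce the whole statement to the constant-coefficient reflection operator already analysed in Theorem~\ref{alphasign} and Corollary~\ref{coralphasign}, and then to transport the sign information through the substitution $s\mapsto A(s)$ that defines $G_1$ in \eqref{Ggenral}. Throughout, the hypotheses of Theorem~\ref{thmcases123} guarantee that $\ol{G_2}$ and hence $G_1$ are genuine Green's functions, so that speaking of their sign is meaningful.

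First I would specialise to $b\equiv0$. Then $b_e\equiv0$, and since $a\not\equiv0$ the relation $b_e=k\,a$ of case (C1) forces $k=0$; moreover $B_e\equiv0$, so the exponential prefactor in \eqref{Ggenral} is identically $1$ and
\[
G_1(t,s)=H(t,s)=\ol{G_2}(A(t),A(s)),
\]
where $\ol{G_2}$ is the Green's function of the pure reflection problem $x'(t)+x(-t)=h(t)$, $x(-A(T))=x(A(T))$, on $[-|A(T)|,|A(T)|]$. Because $a\ge0$ on $I$, the primitive $A$ is odd and nondecreasing, so $A(T)\ge0$, the interval is $[-A(T),A(T)]$, and $A\colon I\to[-A(T),A(T)]$ is continuous and onto by the intermediate value theorem. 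In particular the cases $A(T)<0$ in the statement are vacuous under $a\ge0$ and are recorded only for parallelism with Theorem~\ref{alphasign}.

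Next I would read off the sign of $\ol{G_2}$ from Theorem~\ref{alphasign}. For the reflection operator with unit coefficient on $[-A(T),A(T)]$ the relevant parameter is $\a=1\cdot A(T)=A(T)$, so that theorem yields at once: $\ol{G_2}>0$ on $[-A(T),A(T)]^2$ when $A(T)\in(0,\pi/4)$; $\ol{G_2}$ vanishes exactly on the four-point set $P$ and is strictly positive elsewhere when $A(T)=\pi/4$; and $\ol{G_2}$ has no constant sign when $A(T)>\pi/4$ (the excluded parameter values being ruled out by $(C1^*)$). Since $(A(t),A(s))$ sweeps out all of $[-A(T),A(T)]^2$ as $(t,s)$ runs over $I^2$, the identity $G_1=\ol{G_2}\circ(A\times A)$ transfers each strict-sign conclusion verbatim to $G_1$ on $I^2$.

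The delicate point, and the main obstacle, is the boundary case $A(T)=\pi/4$ together with the final operator characterisation. The difficulty is that $A$ need not be injective: on every subinterval where $a$ vanishes, $A$ is constant, so the zero set of $G_1$ is the full preimage $(A\times A)^{-1}(P)$ rather than a four-point set, and one has to check that $G_1$ keeps its strict sign off this preimage (this uses that the four pieces $k_j$ are analytic and that the case selection for $H$ matches that of $\ol{G_2}\circ(A\times A)$ wherever $A$ is locally nonconstant). For the strongly-inverse-positive and strongly-inverse-negative statements I would invoke Corollary~\ref{coralphasign} through the same substitution: on $\cF_\l(I)$ the operator $x\mapsto x'+a(t)\,x(-t)$ is conjugate, via $t\mapsto A(t)$, to the constant-coefficient operator $R_1$ on $[-A(T),A(T)]$, so the equivalence between strong inverse positivity and the pair of conditions on the parameter and on $\l\ge0$ passes to $R_a$, now with $A(T)$ playing the role of the parameter $\a$ and the endpoint $A(T)=\pi/4$ (respectively $-\pi/4$) included exactly as in Corollary~\ref{coralphasign}.
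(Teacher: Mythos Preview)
Your approach is exactly the one the paper intends: the corollary is stated without proof, immediately after Theorem~\ref{alphasign} and Corollary~\ref{coralphasign}, with the sentence ``With these results we get the following corollary to Theorem~\ref{thmcases123}.'' Your reduction---specialising to $b\equiv0$, $k=0$, $B_e\equiv0$, identifying $G_1(t,s)=\ol{G_2}(A(t),A(s))$, reading off the sign from the constant-coefficient case with parameter $\alpha=A(T)$, and transferring the operator statement via Corollary~\ref{coralphasign}---is precisely that intended argument spelled out in full.

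Your remark on the boundary case is in fact sharper than the paper: when $a\ge0$ is not strictly positive the map $A$ is only nondecreasing, so the vanishing locus of $G_1$ is the full preimage $(A\times A)^{-1}(P)$ rather than the four-point set $P$ literally written in the statement (which, as you note, is given in $A$-coordinates and therefore does not sit inside $I^2$ verbatim). This is a genuine imprecision in the corollary as stated, not a gap in your reasoning.
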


The second part of this last corollary, drawn from positivity (or negativity) of the Green's function could have been obtained, as we show below, without having so much knowledge about the Green's function. In order to show this, consider the following proposition in the line of the work of Torres \cite[Theorem 2.1]{Tor}.\par
\begin{pro}\label{proredpro}
Consider the homogeneous initial value problem
\begin{equation}\label{eqhomivp} x'(t)+a(t)\,x(-t)+b(t)\,x(t)=0,\ t\in I;\ x(t_0)=0.\end{equation}
If problem \eqref{eqhomivp} has a unique solution ($x\equiv 0$) on $I$ for all $t_0\in I$ then, if the Green function for \eqref{eq2cp} exists, it has constant sign.\par
What is more, if we further assume $a+b$ has constant sign, the Green's function has the same sign as $a+b$.
\end{pro}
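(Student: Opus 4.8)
The first step is to reinterpret the hypothesis. Writing $Lx(t):=x'(t)+a(t)x(-t)+b(t)x(t)$, the evaluation map $x\mapsto x(t_0)$ carries $\ker L$ injectively into $\bR$ for every $t_0\in I$, so the hypothesis forces $\dim\ker L\le 1$ and makes every nontrivial homogeneous solution nowhere zero on $I$. In the situation of interest $\ker L$ is one–dimensional, spanned by a function $\phi_0$ which we normalise to be strictly positive on $I$; since the Green's function exists, the periodic problem \eqref{eq2cp} is uniquely solvable, so no nontrivial homogeneous solution meets the boundary condition and in particular $\phi_0(-T)\neq\phi_0(T)$.

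The sign–matching assertion is then immediate from one observation that requires no knowledge of $G$ at all: the constant function $x\equiv 1$ satisfies the boundary condition and $L[1]=a+b$, so by uniqueness $1=\int_{-T}^{T}G(t,s)\,(a(s)+b(s))\,\dif s$ for every $t\in I$. Once $G(t,\cdot)$ is known to be of one sign, the strict positivity of this integral is incompatible with $G(t,\cdot)$ and $a+b$ carrying opposite signs; hence $G$ must have the same sign as $a+b$. Everything therefore reduces to proving that $G$ is of constant sign.

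For that I would turn the reflection into a local, two–dimensional problem. Setting $y(t):=x(-t)$ and $X:=(x,y)^{\top}$, the equation $Lx=h$ becomes the first order system $X'=\cA(t)X+F$ with $\cA(t)=\begin{pmatrix}-b(t)&-a(t)\\ a(-t)&b(-t)\end{pmatrix}$ and $F=(h(t),-h(-t))^{\top}$, while $x(-T)=x(T)$ becomes the genuinely periodic condition $X(-T)=X(T)$. One checks the structural identity $J\cA(-t)J=-\cA(t)$ with $J=\begin{pmatrix}0&1\\1&0\end{pmatrix}$, which says precisely that $t\mapsto JX(-t)$ maps solutions to solutions; the reflection solutions are its fixed points, and $\Phi_0:=(\phi_0(t),\phi_0(-t))^{\top}$ is, up to scale, the unique such fixed point. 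Denoting by $\mathbf{G}$ the matrix Green's function of this periodic $2\times2$ system, the substitution $\tau\mapsto-\tau$ in the second component yields $G(t,s)=\mathbf{G}_{11}(t,s)-\mathbf{G}_{12}(t,-s)$, and it is the sign of this combination that must be controlled.

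The main obstacle is exactly this last step: inferring that a fixed combination of entries of $\mathbf{G}$ is one–signed from the single scalar fact that $\phi_0$ never vanishes. A $2\times2$ periodic system need not have a one–signed Green's matrix, so the argument cannot be generic and must exploit the skew structure $J\cA(-t)J=-\cA(t)$ together with the nowhere–zero, $J$–symmetric solution $\Phi_0$. Concretely, I would fix $s$ and study $g_s:=G(\cdot,s)$, which solves the homogeneous system on $I\setminus\{-s,s\}$ with a unit jump in its first component at $t=s$ and in $g_s(-\,\cdot)$ at $t=-s$, and satisfies the periodic condition; expressing $g_s$ piecewise through the fundamental matrix $\Phi$ and matching across $\pm s$ reduces the claim to a Wronskian–type inequality governed by $\phi_0\neq0$. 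An equivalent route, closer to the spirit of Torres, is by contradiction: if $g_s$ vanished at some $t_1\neq s$, one would splice the two homogeneous pieces into a nontrivial solution of $Lx=0$ with a zero, contradicting the hypothesis. Making this splicing rigorous in the presence of the reflection term is the delicate point, and is precisely where $J\cA(-t)J=-\cA(t)$ and the one–dimensionality of $\ker L$ have to be used.
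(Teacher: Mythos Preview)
Your sign-matching argument via the test function $x\equiv 1$ is correct and coincides verbatim with the paper's.

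For the constant-sign claim, however, you do not actually give a proof. Both routes you outline---the Wronskian-type inequality through the matrix Green's function $\mathbf G$, and the splicing contradiction---end with an explicitly acknowledged gap (``the main obstacle is exactly this last step'', ``making this splicing rigorous \dots\ is the delicate point''). That step is the entire content of the proposition, so as written the proposal is incomplete.

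The paper avoids the $2\times 2$ companion system altogether and carries out the splicing directly at the scalar level. Extend $a,b$ to $2T$-periodic functions on $\bR$, fix $s_1$, and let $g$ be the $2T$-periodic extension of $G_1(\cdot,s_1)$. The boundary relation $G_1(-T,s_1)=G_1(T,s_1)$ makes $g$ absolutely continuous away from $s_1+2T\bZ$. If $G_1(t_1,s_1)=0$ for some $t_1>s_1$, then $f:=g|_{(s_1,\,s_1+2T)}$ is absolutely continuous, satisfies the homogeneous equation a.e., and vanishes at $t_1$; the hypothesis then forces $f\equiv 0$, contradicting the unit jump a Green's function must carry across the diagonal. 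Hence $G_1$ never vanishes, and being continuous off the diagonal it has constant sign.

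So the $J$-symmetry, the matrix kernel $\mathbf G$, and the decomposition $G=\mathbf G_{11}-\mathbf G_{12}(\cdot,-\cdot)$ are all unnecessary. Your observation that every nontrivial element of $\ker L$ is nowhere zero is exactly the fact the paper exploits, but it applies that fact directly to the periodic extension of $G_1(\cdot,s_1)$ rather than routing it through the two-dimensional system.
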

\begin{proof}
Without lost of generality, consider $a$ to be a $2T$-periodic $L^1$ function defined on $\bR$ (the solution of \eqref{eq2cp} will be considered in $I$). Let $G_1$ be the Green's function for problem \eqref{eq2cp}. Since $G_1(T,s)=G_1(-T,s)$ for all $s\in I$, and $G_1$ is continuous except at the diagonal, it is enough to prove that $G_1(t,s)\ne0\sfa t,s\in I$.\par
Assume, on the contrary, that there exists $t_1,s_1\in I$ such that $G_1(t_1,s_1)=0$. Let $g$ be the $2T$-periodic extension of $G_1(\cdot,s_1)$. Let us assume $t_1>s_1$ (the other case would be analogous). Let $f$ be the restriction of $g$ to $(s_1,s_1+2T)$. $f$ is absolutely continuous and satisfies \eqref{eqhomivp} a.e. for $t_0=t_1$, hence, $f\equiv0$. This contradicts the fact of $G_1$ being a Green's function, therefore $G_1$ has constant sign.\par
Realize now that $x\equiv1$ satisfies
$$x'(t)+a(t)x(-t)+b(t)x(t)=a(t)+b(t),\ x(-T)=x(T).$$
Hence, $\int_{-T}^TG_1(t,s)(a(s)+b(s))\dif s=1$ for all $t\in I$. Since both $G_1$ and $a+b$ have constant sign, they have the same sign.
\end{proof}
The following corollaries are an straightforward application of this result to the cases (C1)--(C3) respectively.
\begin{cor}\label{cor1sig} Assume $a$ has constant sign. Under the assumptions of (C1) and Theorem \ref{thmcases123}, $G_1$ has constant sign if
$$|A(T)|< \frac{\arccos(k)}{2\sqrt{1-k^2}}.$$
Furthermore, $\sign(G_1)=\sign(a)$.
\end{cor}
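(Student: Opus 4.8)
The plan is to deduce the statement from Proposition \ref{proredpro}. That result reduces the constant-sign property of $G_1$ to a uniqueness statement for the initial value problem \eqref{eqhomivp}, and then fixes the common sign by testing against $x\equiv 1$. So the proof splits into two tasks: first, to show that \eqref{eqhomivp} admits only the trivial solution for every $t_0\in I$ precisely when $|A(T)|<\frac{\arccos(k)}{2\sqrt{1-k^2}}$; and second, to identify that sign as $\sign(a)$.

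For the first task I would use the explicit description of the homogeneous solutions in case (C1) obtained in Section 3: every solution of \eqref{gen-eq} is a scalar multiple of
$$u_0(t)=e^{-B_e(t)}\left[\cos\(\sqrt{1-k^2}\,A(t)\)-\frac{1+k}{\sqrt{1-k^2}}\sin\(\sqrt{1-k^2}\,A(t)\)\right].$$
Since this solution space is one-dimensional and $e^{-B_e(t)}>0$, problem \eqref{eqhomivp} has only the trivial solution for a given $t_0$ iff $u_0(t_0)\ne 0$, i.e.\ iff the bracketed factor does not vanish at $t_0$; hence uniqueness for all $t_0\in I$ is equivalent to the bracket being zero-free on $I$ (a condition that, note, does not involve $b_o$). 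Writing $\gamma:=\arccos(k)\in(0,\pi)$ and using $\sqrt{1-k^2}=\sin\gamma$ together with the half-angle identity $\frac{1+k}{\sqrt{1-k^2}}=\cot(\gamma/2)$, the bracket collapses, after multiplication by $\sin(\gamma/2)>0$, to $\sin\left(\frac{\gamma}{2}-\sqrt{1-k^2}\,A(t)\right)$. Its zeros occur at $\sqrt{1-k^2}\,A(t)=\frac{\gamma}{2}+m\pi$ with $m\in\bZ$, so the one nearest the origin is at $\sqrt{1-k^2}\,A(t)=\frac{\gamma}{2}$. Because $a$ has constant sign, $A$ is monotone and odd, whence $A(I)=[-|A(T)|,|A(T)|]$; the bracket is therefore zero-free on $I$ exactly when $\sqrt{1-k^2}\,|A(T)|<\frac{\gamma}{2}$, which is the announced inequality. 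Proposition \ref{proredpro} then yields that $G_1$ has constant sign.

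To pin down that sign, I would first observe from \eqref{Ggenral} that $G_1(t,s)=e^{B_e(s)-B_e(t)}\ol{G_2}(A(t),A(s))$, so $\sign(G_1)$ equals $\sign\!\big(\ol{G_2}(A(t),A(s))\big)$ and is in particular independent of the odd part $b_o$, which enters only through the strictly positive factor $e^{B_e(s)-B_e(t)}$. It therefore suffices to determine the sign in the subcase $b_o\equiv 0$, where $b=k\,a$ and hence $a+b=(1+k)\,a$; since $|k|<1$ gives $1+k>0$, the function $a+b$ has constant sign equal to $\sign(a)$. The second part of Proposition \ref{proredpro} then gives $\sign(G_1)=\sign(a+b)=\sign(a)$, and by the $b_o$-independence just noted the same conclusion persists for arbitrary $b_o$.

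The main obstacle is the sharp trigonometric reduction in the first task: one must verify that $\cos\theta-\frac{1+k}{\sqrt{1-k^2}}\sin\theta$ is genuinely a single phase-shifted sine whose first positive zero is located exactly at $\theta=\tfrac12\arccos(k)$, since it is this computation that produces the precise constant $\frac{\arccos(k)}{2\sqrt{1-k^2}}$ rather than a merely sufficient bound. A secondary point requiring care is the equivalence between unique solvability of \eqref{eqhomivp} for every $t_0\in I$ and $u_0$ being zero-free on $I$, which rests on the one-dimensionality of the homogeneous solution space established in Section 3.
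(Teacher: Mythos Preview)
Your proof is correct and follows the same overall strategy as the paper: reduce to Proposition \ref{proredpro}, use the explicit one-parameter family of homogeneous solutions from case (C1), and locate the first zero of the trigonometric factor. Your trigonometric reduction is slightly more direct than the paper's---you substitute $\gamma=\arccos(k)$ at the outset and use the half-angle identity $\tfrac{1+k}{\sqrt{1-k^2}}=\cot(\gamma/2)$ to collapse the bracket to $\sin(\gamma/2-\sqrt{1-k^2}A(t))$, whereas the paper first applies the phasor addition formula with an auxiliary phase $\theta\in(\pi/2,\pi)$ and then recovers $\theta=\pi-\tfrac{1}{2}\arccos(k)$ via the double-angle formula; the two routes produce the same sine and hence the same bound.

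The one place where your argument genuinely differs is the identification of $\sign(G_1)$. The paper invokes $a+b=(k+1)a+b_o$ together with ``continuity of $G_1$ with respect to $a$ and $b$'' to deform $b_o$ to zero. You instead observe directly from \eqref{Ggenral} that $G_1(t,s)=e^{B_e(s)-B_e(t)}H(t,s)$ with $H$ independent of $b_o$, so the sign of $G_1$ is determined by the case $b_o\equiv 0$, where $a+b=(1+k)a$ has constant sign and Proposition \ref{proredpro} applies cleanly. Your argument is more self-contained here and avoids the (unproved) continuity claim; the paper's version, on the other hand, hints at a perturbative principle that could apply even without the explicit factorization.
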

\begin{proof}
The solutions of \eqref{gen-eq} for the case (C1), as seen before, are given by
$$u(t)  =\a e^{-B_e(t)}\[ \cos\(\sqrt{1-k^2}A(t)\)-\frac{1+k}{\sqrt{1-k^2}}\sin\(\sqrt{1-k^2}A(t)\)\].$$
Using a particular case of the phasor addition formula\footnote{$\a\cos \c+\b\sin \c=\sqrt{\a^2+\b^2}\sin(\c+\theta)$, where $\theta\in[-\pi,\pi)$ is the angle such that $\cos\theta=\frac{\b}{\sqrt{\a^2+\b^2}}$, $\sin\theta=\frac{\a}{\sqrt{\a^2+\b^2}}$.},
 $$u(t)=\a e^{-B_e(t)}\sqrt{\frac{2}{1-k}}\sin\(\sqrt{1-k^2}A(t)+\theta\),$$
 where $\theta\in[-\pi,\pi)$ is the angle such that
\begin{equation}\label{sincos}\sin\theta=\sqrt{\frac{1-k}{2}}\quad\text{and}\quad\cos\theta=-\frac{1+k}{\sqrt{1-k^2}}\sqrt{\frac{1-k}{2}}=-\sqrt{\frac{1+k}{2}}.\end{equation}
Observe that this implies that $\theta\in\(\frac{\pi}{2},\pi\)$.\par
In order for the hypothesis of Proposition \ref{proredpro} to be satisfied, it is enough and sufficient to ask for $0\not\in u(I)$ for some $\a\ne0$. Equivalently, that 
$$\sqrt{1-k^2}A(t)+\theta\neq \pi n\sfa n\in\bZ\sfa t\in I,$$
That is,
$$A(t)\neq \frac{\pi n-\theta}{\sqrt{1-k^2}}\sfa n\in\bZ\sfa t\in I.$$
Since $A$ is odd and injective and $\theta\in\(\frac{\pi}{2},\pi\)$, this is equivalent to
\begin{equation}\label{firststimate}|A(T)|< \frac{\pi-\theta}{\sqrt{1-k^2}}.\end{equation}
Now, using the double angle formula for the sine and \eqref{sincos},
$$\frac{1-k}{2}=\sin^2\theta=\frac{1-\cos(2\theta)}{2}\text{,\quad this is,\quad} k=\cos(2\theta),$$
which implies, since $2\theta\in(\pi,2\pi)$,
$$\theta=\pi-\frac{\arccos(k)}{2},$$
where $\arccos$ is defined such that it's image is $[0,\pi)$. Plugging this into inequality \eqref{firststimate} yields
$$|A(T)|<\sigma(k):= \frac{\arccos(k)}{2\sqrt{1-k^2}},\quad k\in(-1,1).$$
\par
Using $|k|<1$, $a+b=(k+1)a+b_o$ and the continuity of $G_1$ with respect to $a$ and $b$, we can prove that the sign of the Green's function is given by Proposition \ref{proredpro}.
\end{proof}
\begin{rem}
In the case $a$ is a constant $\omega$ and $k=0$, $A(I)=[-|\omega|T,|\omega|T]$, and the condition can be written as $|\omega|T<\frac{\pi}{4}$, which is consistent with the results found in \cite{Cab4}.\end{rem}
\begin{rem}
Observe that $\sigma$ is strictly decreasing on $(-1,1)$ and
$$\lim_{k\to-1^+}\sigma(k)=+\infty,\quad\lim_{k\to1^-}\sigma(k)=\frac{1}{2}.$$
\end{rem}
\begin{cor}\label{corC3sign} Under the conditions of (C3) and Theorem \ref{thmcases123}, $G_{1}$ has constant sign if $|A(T)|<\frac{1}{2}$.
\end{cor}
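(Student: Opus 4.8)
The plan is to deduce everything from Proposition \ref{proredpro}, so the whole task reduces to verifying that the homogeneous initial value problem \eqref{eqhomivp} has only the trivial solution $x\equiv0$ for \emph{every} $t_0\in I$. First I would recall, from the analysis of case (C3) in Section 3, that every solution of the homogeneous equation \eqref{gen-eq} is obtained from \eqref{eqc3} by taking $\b=-\a$, and hence has the form
$$u(t)=\a\,e^{-B_e(t)}\bigl(1-2A(t)\bigr),\qquad \a\in\bR.$$
Since the solution space is one-dimensional and the factor $e^{-B_e(t)}$ never vanishes, a \emph{nontrivial} solution vanishes at a point $t_0$ exactly when $1-2A(t_0)=0$.

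The next step is to translate the uniqueness hypothesis of Proposition \ref{proredpro}. The problem \eqref{eqhomivp} admits only $x\equiv0$ for each $t_0\in I$ if and only if no nontrivial $u$ of the above form has a zero in $I$; that is, if and only if $1-2A(t)\neq0$, equivalently $A(t)\neq\tfrac12$, for all $t\in I$. This is precisely the requirement $\tfrac12\notin A(I)$, so the corollary amounts to showing that $|A(T)|<\tfrac12$ forces $\tfrac12\notin A(I)$.

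Then I would compute $A(I)$. Because $a$ is even throughout this section, $A$ is odd, so $A(I)$ is symmetric about $0$; and since $a$ has constant sign, $A$ is monotone, whence $A(I)=[-|A(T)|,|A(T)|]$. Consequently $\tfrac12\notin A(I)$ is equivalent to $|A(T)|<\tfrac12$. Under this bound the hypothesis of Proposition \ref{proredpro} holds, and since the Green's function $G_1$ exists by Theorem \ref{thmcases123}, that proposition yields that $G_1$ has constant sign, as claimed. This runs entirely parallel to the argument for case (C1) in Corollary \ref{cor1sig}, but is shorter because the solution here is a linear (rather than sinusoidal) function of $A(t)$, so no phasor manipulation is needed.

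The delicate point, and the place where the constant-sign assumption on $a$ is genuinely used, is the identification $A(I)=[-|A(T)|,|A(T)|]$: it rests on $A$ attaining its extreme values at the endpoints $\pm T$, which monotonicity guarantees. If $a$ were allowed to change sign, $A$ could exceed $\tfrac12$ in the interior of $I$ while still satisfying $|A(T)|<\tfrac12$, so the endpoint bound alone would not guarantee $\tfrac12\notin A(I)$; this is the one step that would fail without constant sign. Finally, I would note that, in contrast to Corollary \ref{cor1sig}, no statement about the actual sign of $G_1$ is made, consistent with the fact that in case (C3) one has $a+b=2a+b_o$, whose sign is not determined by that of $a$, so the second (``furthermore'') part of Proposition \ref{proredpro} does not apply here.
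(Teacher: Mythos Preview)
Your proof is correct and follows exactly the approach the paper intends: combine the explicit form \eqref{eqc3} of the homogeneous solutions in case (C3) with Proposition~\ref{proredpro} and the existence result Theorem~\ref{thmcases123}. The paper's own proof is a one-line pointer to these same three ingredients, so you have simply filled in the details.

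Your observation about the constant-sign hypothesis on $a$ is apt. The statement of Corollary~\ref{corC3sign} omits it, but the surrounding text (the explicit hypothesis in Corollaries~\ref{cor1sig} and~\ref{cor2sig}, and the later remark ``not assuming $a$ to be a constant sign function'') makes clear the authors intend it here too; your argument that monotonicity of $A$ is what forces $A(I)=[-|A(T)|,|A(T)|]$ pinpoints exactly where it enters. Your closing remark on why no sign conclusion is drawn (because $a+b=2a+b_o$ need not have constant sign) is also a correct reading of why the ``furthermore'' clause of Proposition~\ref{proredpro} is not invoked.
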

\begin{proof} This corollary is a direct consequence of equation \eqref{eqc3}, Proposition \ref{proredpro} and Corollary \ref{thmcases123}. Observe that the result is consistent with $\sigma(1^-)=\frac{1}{2}$.
\end{proof}
In order to prove the next corollary, we need the following 'hyperbolic version' of the phasor addition formula. It's proof can be done without difficulty.
\begin{lem}\label{hyppha}
Let $\a$, $\b,\c\in\bR$, then
$$\a\cosh \c + \b\sinh\c= \sqrt{|\a^2-\b^2|}\begin{cases}\cosh\(\frac{1}{2}\ln\left|\frac{\a+\b}{\a-\b}\right|+\c\) & \text{if}\quad \a>|\b|,
\\ -\cosh\(\frac{1}{2}\ln\left|\frac{\a+\b}{\a-\b}\right|+\c\) & \text{if}\quad- \a>|\b|,
\\
\sinh\(\frac{1}{2}\ln\left|\frac{\a+\b}{\a-\b}\right|+\c\) & \text{if}\quad \b>|\a|,
\\
-\sinh\(\frac{1}{2}\ln\left|\frac{\a+\b}{\a-\b}\right|+\c\) & \text{if }\quad -\b>|\a|,
\\
\a\,e^\c & \text{if }\quad\a=\b,\\
\a\,e^{-\c} & \text{if }\quad\a=-\b.\\
\end{cases}$$
\end{lem}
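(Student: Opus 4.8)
The plan is to reduce the identity to the hyperbolic angle-addition formulas $\cosh(\c+\delta)=\cosh\c\cosh\delta+\sinh\c\sinh\delta$ and $\sinh(\c+\delta)=\sinh\c\cosh\delta+\cosh\c\sinh\delta$, treating $\c$ as the variable and $\a,\b$ as fixed, and then searching for an amplitude $R>0$ and a phase $\delta$ that absorb $\a$ and $\b$. The two degenerate lines $\a=\b$ and $\a=-\b$ I would dispatch first and directly: there $\a\cosh\c+\b\sinh\c$ collapses to $\a(\cosh\c\pm\sinh\c)=\a\,e^{\pm\c}$, which are exactly the last two branches, and no phase is needed.

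For the generic cases I would split according to whether $\a^2$ or $\b^2$ dominates, since the sign of $\a^2-\b^2$ is what dictates whether the correct single-term form is a $\cosh$ or a $\sinh$. When $\a^2>\b^2$ I look for $\a\cosh\c+\b\sinh\c=\pm R\cosh(\c+\delta)$; expanding and matching the coefficients of $\cosh\c$ and $\sinh\c$ gives $R\cosh\delta=\pm\a$ and $R\sinh\delta=\pm\b$, with a common sign chosen so that $\cosh\delta>0$, i.e. $+$ when $\a>|\b|$ and $-$ when $-\a>|\b|$. Adding and subtracting these two equations yields $Re^{\delta}=\pm(\a+\b)$ and $Re^{-\delta}=\pm(\a-\b)$; multiplying gives $R^2=\a^2-\b^2$, hence $R=\sqrt{|\a^2-\b^2|}$, while dividing gives $e^{2\delta}=(\a+\b)/(\a-\b)$, hence $\delta=\frac12\ln\left|\frac{\a+\b}{\a-\b}\right|$. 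The case $\b^2>\a^2$ is identical with $\cosh(\c+\delta)$ replaced by $\sinh(\c+\delta)$: matching now forces $R\cosh\delta=\pm\b$ and $R\sinh\delta=\pm\a$, so that $R^2=\b^2-\a^2=|\a^2-\b^2|$ and the same expression for $\delta$ reappears after taking absolute values inside the logarithm.

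The only delicate point, and the step I would write out most carefully, is the bookkeeping of signs across the four inequalities $\a>|\b|$, $-\a>|\b|$, $\b>|\a|$, $-\b>|\a|$. In each one must verify that the quantity under the square root is positive, that the prefactor $\pm$ is the one making $\cosh\delta>0$ so that a real $\delta$ actually exists, and that the argument $(\a+\b)/(\a-\b)$ of the logarithm is positive, which is precisely what the absolute value in the statement records. All of this is elementary case analysis, so beyond these sign verifications I expect no genuine obstacle; the proof is, as the authors remark, without difficulty.
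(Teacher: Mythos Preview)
Your approach is correct and is exactly the natural one: match coefficients against the hyperbolic addition formulas, solve for $R$ and $\delta$, and track the four sign regimes plus the two degenerate lines. The paper itself does not give a proof of this lemma at all---it simply states that ``its proof can be done without difficulty''---so there is nothing to compare against beyond noting that your argument is the straightforward verification the authors had in mind.
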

\begin{cor}\label{cor2sig} Assume $a$ has constant sign. Under the assumptions of (C2) and Theorem \ref{thmcases123}, $G_1$ has constant sign if $k<-1$ or
$$|A(T)|<-\frac{\ln(k-\sqrt{k^2-1})}{2\sqrt{k^2-1}}.$$
Furthermore, $\sign(G_1)=\sign(k\,a)$.
\end{cor}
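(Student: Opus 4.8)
The plan is to mirror the proof of Corollary \ref{cor1sig}, replacing the trigonometric phasor identity by its hyperbolic counterpart, Lemma \ref{hyppha}. First I would recall from case (C2) that every solution of the homogeneous equation \eqref{gen-eq} has the form
$$u(t)=\a\,e^{-B_e(t)}\[\cosh\(\sqrt{k^2-1}\,A(t)\)-\frac{1+k}{\sqrt{k^2-1}}\sinh\(\sqrt{k^2-1}\,A(t)\)\],\quad \a\in\bR.$$
Since this solution space is one-dimensional and $e^{-B_e(t)}\neq0$, the initial value problem \eqref{eqhomivp} has only the trivial solution for every $t_0\in I$ precisely when the bracketed factor never vanishes on $I$. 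Thus, by Proposition \ref{proredpro}, it suffices to determine when this factor has no zero on $I$; whenever that happens, $G_1$ has constant sign.

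Next I would apply Lemma \ref{hyppha} to the bracket, taking its parameters to be $\a=1$, $\b=-\frac{1+k}{\sqrt{k^2-1}}$ and $\c=\sqrt{k^2-1}\,A(t)$, for which
$$\a^2-\b^2=1-\frac{(1+k)^2}{k^2-1}=\frac{2}{1-k}.$$
This quantity governs the case split. If $k<-1$ then $\a^2-\b^2>0$ and $\a>|\b|$, so the bracket equals $\sqrt{|\a^2-\b^2|}\cosh(\,\cdot\,)$, which never vanishes; hence $u$ has no zero for any value of $A(T)$ and $G_1$ has constant sign unconditionally. If $k>1$ then $\a^2-\b^2<0$ and $-\b>|\a|$, so the bracket is a nonzero multiple of $\sinh\(\frac12\ln\big|\frac{\a+\b}{\a-\b}\big|+\sqrt{k^2-1}\,A(t)\)$, which vanishes exactly once, namely when $\sqrt{k^2-1}\,A(t)$ reaches $\c^*:=\tfrac12\ln\big|\frac{\a-\b}{\a+\b}\big|>0$.

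For the quantitative threshold in the case $k>1$ I would use that, since $a$ is even and of constant sign, $A$ is odd and strictly monotone, so $\sqrt{k^2-1}\,A(I)=[-\sqrt{k^2-1}\,|A(T)|,\sqrt{k^2-1}\,|A(T)|]$ is symmetric about $0$. The single zero at $\c^*>0$ is avoided on $I$ if and only if $\sqrt{k^2-1}\,|A(T)|<\c^*$. The one genuine computation is the simplification
$$\c^*=\tfrac12\ln\frac{(1+k)+\sqrt{k^2-1}}{(1+k)-\sqrt{k^2-1}}=\tfrac12\ln\(k+\sqrt{k^2-1}\),$$
which follows by clearing the radical, i.e. by checking $\frac{(1+k)+\sqrt{k^2-1}}{(1+k)-\sqrt{k^2-1}}=k+\sqrt{k^2-1}$ via cross-multiplication. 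Since $k+\sqrt{k^2-1}=\big(k-\sqrt{k^2-1}\big)^{-1}$, the condition becomes
$$|A(T)|<\frac{\ln\(k+\sqrt{k^2-1}\)}{2\sqrt{k^2-1}}=-\frac{\ln\(k-\sqrt{k^2-1}\)}{2\sqrt{k^2-1}},$$
as claimed. I expect this algebraic reduction, together with the correct reading of the branch of Lemma \ref{hyppha} (and the sign bookkeeping of $\a,\b$ that separates $k>1$ from $k<-1$), to be the only delicate point.

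Finally, for the sign I would argue exactly as in Corollary \ref{cor1sig}: write $a+b=(1+k)a+b_o$, and note that the zero set of $u$, hence the constant-sign property, is independent of $b_o$, since $B_e$ enters only through the nowhere-zero factor $e^{-B_e}$. Deforming $b_o$ to $0$ by continuity of $G_1$ in the coefficients keeps $G_1$ nonvanishing off the diagonal, so its sign cannot change; at $b_o\equiv0$ one has $a+b=(1+k)a$, of constant sign $\sign((1+k)a)=\sign(ka)$ because $\sign(1+k)=\sign(k)$ for $|k|>1$. By the second part of Proposition \ref{proredpro}, $\sign(G_1)=\sign(ka)$.
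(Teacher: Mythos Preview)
Your proof is correct and follows essentially the same route as the paper: apply Lemma~\ref{hyppha} to the bracketed hyperbolic expression, separate the cases $k>1$ (where $-\b>|\a|$ gives a $\sinh$ with a single zero) and $k<-1$ (where $\a>|\b|$ gives a nonvanishing $\cosh$), and then invoke Proposition~\ref{proredpro}. Your argument is in fact somewhat more explicit than the paper's in verifying which branch of Lemma~\ref{hyppha} applies and in reducing $\tfrac12\ln\big|\frac{\a-\b}{\a+\b}\big|$ to $\tfrac12\ln\big(k+\sqrt{k^2-1}\big)$; the paper simply writes down the final form $\sinh\big(\tfrac12\ln|k-\sqrt{k^2-1}|+\sqrt{k^2-1}\,A(t)\big)$ and reads off the condition $A(t)\ne-\frac{\ln(k-\sqrt{k^2-1})}{2\sqrt{k^2-1}}$, but the two are equivalent via $(k+\sqrt{k^2-1})(k-\sqrt{k^2-1})=1$. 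For the sign, the paper uses the equivalent decomposition $a+b=(k^{-1}+1)b_e+b_o$ and the same continuity-in-coefficients argument you give.
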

\begin{proof}
The solutions of \eqref{gen-eq} for the case (C2), as seen before, are given by
$$u(t)  =\a e^{-B_e(t)}\[\cosh\(\sqrt{k^2-1}A(t)\)- \frac{1+k}{\sqrt{k^2-1}}\sinh\(\sqrt{k^2-1}A(t)\)\].$$
If $k>1$, then $1<\frac{1+k}{\sqrt{k^2-1}}$, so, using Lemma \ref{hyppha},
 $$u(t)=-\a e^{-B_e(t)}\sqrt{\frac{2k}{k-1}}\sinh\(\frac{1}{2}\ln\left|k-\sqrt{k^2-1}\right|+\sqrt{k^2-1}A(t)\),$$
In order for the hypothesis of Proposition \ref{proredpro} to be satisfied, it is enough and sufficient to ask that $0\not\in u(I)$ for some $\a\ne0$. Equivalently, that 
$$\frac{1}{2}\ln(k-\sqrt{k^2-1})+\sqrt{k^2-1}A(t)\neq 0\sfa t\in I,$$
That is,
$$A(t)\neq -\frac{\ln(k-\sqrt{k^2-1})}{2\sqrt{k^2-1}}\sfa t\in I.$$
Since $A$ is odd and injective, this is equivalent to
$$|A(T)|<\sigma(k):=-\frac{\ln(k-\sqrt{k^2-1})}{2\sqrt{k^2-1}},\quad k>1.$$
Now, if $k<-1$, then $\left|\frac{1+k}{\sqrt{k^2-1}}\right|<1$, so using Lemma \ref{hyppha},
 $$u(t)=\a e^{-B_e(t)}\sqrt{\frac{2k}{k-1}}\cosh\(\frac{1}{2}\ln\left|k-\sqrt{k^2-1}\right|+\sqrt{k^2-1}A(t)\)\ne0\quad\text{for all}\quad t\in I,\ \a\ne0,$$
so the hypothesis of Proposition \ref{proredpro} are satisfied.\par
Using $|k|>1$, $a+b=(k^{-1}+1)b_e+b_o$ and the continuity of $G_1$ on $a$ and $b$ we can prove that the sign of the Green's function is given by Proposition \ref{proredpro}.
\end{proof}

\begin{rem}
If we consider $\sigma$ defined piecewise as in Corollaries \ref{cor1sig} and \ref{cor2sig} and continuously continued through $1/2$, we get
$$\sigma(k):=\begin{cases} \frac{\arccos(k)}{2\sqrt{1-k^2}} & \text{ if } k\in(-1,1) \\
\frac{1}{2} & \text{ if } k=1 \\
-\frac{\ln(k-\sqrt{k^2-1})}{2\sqrt{k^2-1}} & \text{ if } k>1\end{cases}$$
This function is not only continuous (it is defined thus), but also analytic. In order to see this it is enough to consider the extended definition of the logarithm and the square root to the complex numbers. Remember that $\sqrt{-1}:=i$ and that the principal branch of the logarithm is defined as $\ln_0(z)=\ln|z|+i\theta$ where $\theta\in[-\pi,\pi)$ and $z=|z|e^{i\theta}$ for all $z\in\bC\backslash\{0\}$. Clearly, $\ln_0|_{(0,+\infty)}=\ln$.\par
Now, for $|k|<1$,
$\ln_0(k-\sqrt{1-k^2}i)=i\theta$ with $\theta\in[-\pi,\pi)$ such that $\cos\theta=k$, $\sin\theta=-\sqrt{1-k^2}$, that is, $\theta\in[-\pi,0]$. Hence, $i\ln_0(k-\sqrt{1-k^2}i)=-\theta\in[0,\pi]$. Since $\cos(-\theta)=k$, $\sin(-\theta)=\sqrt{1-k^2}$, it is clear that
$$\arccos(k)=-\theta=i\ln_0(k-\sqrt{1-k^2}i).$$
We thus extend $\arccos$ to $\bC$ by
$$\arccos(z):=i\ln_0(z-\sqrt{1-z^2}i),$$
which is clearly an analytic function. So, if $k>1$,
$$\sigma(k)=-\frac{\ln(k-\sqrt{k^2-1})}{2\sqrt{k^2-1}}=-\frac{\ln_0(k-i\sqrt{1-k^2})}{2i\sqrt{1-k^2}}=\frac{i\ln_0(k-i\sqrt{1-k^2})}{2\sqrt{1-k^2}}=\frac{\arccos(k)}{2\sqrt{1-k^2}}.$$
$\sigma$ is positive, strictly decreasing and
$$\lim_{k\to -1^+}\sigma(k)=+\infty,\quad\lim_{k\to+\infty}\sigma(k)=0.$$
\end{rem}
In a similar way to Corollaries \ref{cor1sig},\ref{corC3sign} and \ref{cor2sig}, we can prove results not assuming $a$ to be a constant sign function. The result is the following.

\begin{cor}
Under the assumptions of Theorem \ref{thmcases123} and conditions (C1), (C2) or (C3) (let $k$ be the constant involved in such conditions), $G_1$ has constant sign if $\max A(I)<\sigma(k)$.
\end{cor}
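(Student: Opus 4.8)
The plan is to invoke Proposition \ref{proredpro} and thereby reduce the claim to a non-vanishing statement for the solutions of the homogeneous equation \eqref{gen-eq}. Recall that the hypothesis of Proposition \ref{proredpro} requires that the homogeneous initial value problem \eqref{eqhomivp} admit only the trivial solution for every $t_0\in I$. In each of the cases (C1)--(C3) the solution space of \eqref{gen-eq} is one-dimensional (it is spanned by the function found in Section 3 under the relation $\beta=-\alpha$), so this hypothesis is equivalent to the single requirement that every nontrivial solution $u$ be free of zeros on $I$: if $u$ never vanishes, then $x(t_0)=0$ forces $\alpha=0$ and hence $x\equiv 0$, whereas a zero of $u$ at some $t_0$ would produce a nontrivial solution of \eqref{eqhomivp}. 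Thus it suffices to show that, under $\max A(I)<\sigma(k)$, every nontrivial solution of \eqref{gen-eq} is zero-free on $I$. Since $a$ may change sign here, we do not attempt to identify which sign $G_1$ takes; only the first conclusion of Proposition \ref{proredpro} is used.

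Next I would reuse the explicit representations already obtained in the proofs of Corollaries \ref{cor1sig} and \ref{cor2sig}, together with Lemma \ref{hyppha} and the phasor formula. In case (C1) the solution is $u(t)=\alpha e^{-B_e(t)}\sqrt{2/(1-k)}\,\sin(\sqrt{1-k^2}\,A(t)+\theta)$ with $\theta\in(\pi/2,\pi)$, so its zeros occur exactly where $A(t)=(n\pi-\theta)/\sqrt{1-k^2}$, $n\in\bZ$; in case (C2) with $k>1$ the hyperbolic phasor yields a single $\sinh$ factor vanishing only at $A(t)=\sigma(k)$, while for $k<-1$ it yields a $\cosh$ factor that never vanishes; in case (C3) the solution is $u(t)=\alpha e^{-B_e(t)}(1-2A(t))$, vanishing only at $A(t)=1/2=\sigma(1)$. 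In every case the zeros of $u$ correspond to a discrete set of values of $A(t)$ whose element nearest the origin has absolute value precisely $\sigma(k)$ (using $\theta=\pi-\tfrac12\arccos k$ in (C1), exactly as in the proof of Corollary \ref{cor1sig}).

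The one genuinely new ingredient is that we no longer assume $a$ to have constant sign, so $A$ need not be injective and the reduction to $|A(T)|$ used in the earlier corollaries is unavailable. I would replace it by the following observation: since $a$ is even, $A$ is odd and continuous on $I=[-T,T]$, so by the intermediate value theorem and $A(0)=0$ its image is an interval symmetric about the origin, namely $A(I)=[-\max A(I),\max A(I)]$. Consequently the nontrivial solution $u$ is zero-free on $I$ if and only if this symmetric interval contains none of the critical values of $A$ listed above; because the critical value closest to the origin has absolute value $\sigma(k)$, this happens exactly when $\max A(I)<\sigma(k)$ (with the convention $\sigma(k)=+\infty$ for $k\le -1$, consistent with $\lim_{k\to -1^+}\sigma(k)=+\infty$, which covers the $\cosh$ case of (C2)).

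With the hypothesis of Proposition \ref{proredpro} thus verified, that proposition delivers at once that $G_1$ has constant sign, completing the argument. The main obstacle is precisely the step of the preceding paragraph: recognizing that dropping the constant-sign assumption on $a$ forces us to measure the range of $A$ by $\max A(I)$ rather than by $|A(T)|$, and that the odd symmetry of $A$ guarantees this range is the full symmetric interval $[-\max A(I),\max A(I)]$, so that the same threshold $\sigma(k)$ continues to govern non-vanishing across all three cases.
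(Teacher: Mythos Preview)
Your proposal is correct and follows exactly the route the paper indicates: the paper does not give a detailed proof but merely states that the result is obtained ``in a similar way to Corollaries \ref{cor1sig}, \ref{corC3sign} and \ref{cor2sig}'' without the constant-sign hypothesis on $a$, and you have supplied precisely those details. Your key observation---that dropping injectivity of $A$ forces one to replace $|A(T)|$ by $\max A(I)$, while the oddness of $A$ guarantees $A(I)=[-\max A(I),\max A(I)]$ so that the same threshold $\sigma(k)$ still governs non-vanishing---is exactly the adaptation the paper has in mind.
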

\section{The cases (C4) and (C5)}

Consider the following problem derived from the nonhomogeneous problem \eqref{eq2cp}.

\begin{align}\label{eoparts}\begin{pmatrix}x_o' \\ x_e'\end{pmatrix} & =\begin{pmatrix}a_o-b_o & -a_e-b_e \\ a_e-b_e & -a_o-b_o\end{pmatrix}\begin{pmatrix}x_o \\ x_e\end{pmatrix}+\begin{pmatrix}h_e \\ h_o\end{pmatrix}.
\end{align}
The following theorems tell us what happens when we impose the boundary conditions.

\begin{thm} If condition (C4) holds, then problem \eqref{eq2cp} has solution if and only if
$$\int_0^Te^{B_e(s)}h_e(s)\dif s=0,$$
and in that case the solutions of \eqref{eq2cp} are given by
\begin{equation}
\label{e-c4}
u_c(t)=e^{-B_e(t)}\[c+\int_0^t\(e^{B_e(s)}h(s)+2a_e(s)\int_0^se^{B_e(r)}h_e(r)\dif r\)\dif s\]\nkp\text{for } c\in\bR.\end{equation}
\end{thm}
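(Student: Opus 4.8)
The plan is to exploit that condition (C4), namely $b_e=-a$, forces $a$ to be even (since $b_e$ is even), so $a_o\equiv 0$ and $a_e=a$. Substituting this into the matrix of \eqref{eoparts} makes the entry $-a_e-b_e$ vanish and turns $a_e-b_e$ into $2a$, so the system decouples into the triangular pair
\begin{equation*}
x_o'+b_o\,x_o=h_e,\qquad x_e'+b_o\,x_e=2a\,x_o+h_o,
\end{equation*}
where I use that $B_e'=b_o$ and $B_e(0)=0$. First I would solve the first equation with the integrating factor $e^{B_e}$, obtaining $x_o(t)=e^{-B_e(t)}(x_o(0)+\int_0^t e^{B_e(s)}h_e(s)\dif s)$.

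The central point is to recover the parity constraints that guarantee $x=x_o+x_e$ is a genuine solution of \eqref{eq2cp} rather than merely a solution of the decoupled system. I would show that $x_o$ is odd if and only if $x_o(0)=0$: setting $y(t):=-x_o(-t)$ and using that $b_o$ is odd and $h_e$ is even, one checks that $y$ solves the same first order linear equation as $x_o$ with $y(0)=-x_o(0)$, and uniqueness for linear ODEs with $L^1$ coefficients forces $y\equiv x_o$ exactly when $x_o(0)=0$. Taking $x_o(0)=0$ fixes $x_o(t)=e^{-B_e(t)}\int_0^t e^{B_e(s)}h_e(s)\dif s$. An identical reflection argument for $z(t):=x_e(-t)$, now using the evenness of $a$ and the oddness of $b_o$, $h_o$ and $x_o$, shows that the solution $x_e$ of the second equation is automatically even for every value $x_e(0)=c$; hence $c$ is the only free parameter.

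Next I would translate the boundary condition. Since $x_o$ is odd and $x_e$ is even, $x(\pm T)=\pm x_o(T)+x_e(T)$, so $x(-T)=x(T)$ is equivalent to the single scalar identity $x_o(T)=0$. Because $x_o(T)=e^{-B_e(T)}\int_0^T e^{B_e(s)}h_e(s)\dif s$, this is precisely $\int_0^T e^{B_e(s)}h_e(s)\dif s=0$, and this yields the solvability condition in both directions: if \eqref{eq2cp} admits a solution, its odd part satisfies $x_o(0)=x_o(T)=0$ and the integral must vanish, while conversely the vanishing integral makes the constructed $x_o$ meet $x_o(T)=0$, so $u_c$ satisfies the periodic condition.

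Finally, solving the second equation by the integrating factor gives $x_e(t)=e^{-B_e(t)}(c+\int_0^t e^{B_e(s)}(2a(s)x_o(s)+h_o(s))\dif s)$, and assembling $u_c=x_o+x_e$ completes the construction. Substituting $e^{B_e(s)}x_o(s)=\int_0^s e^{B_e(r)}h_e(r)\dif r$ turns the term $2a\,x_o$ into $2a_e(s)\int_0^s e^{B_e(r)}h_e(r)\dif r$, and combining $h_e+h_o=h$ collapses the remaining single integrals into $\int_0^t e^{B_e(s)}h(s)\dif s$, producing exactly \eqref{e-c4}. I expect the main obstacle to be the parity bookkeeping in the two reflection/uniqueness arguments, where one must match the parities of $a$, $b_o$, $h_e$, $h_o$ and $x_o$ correctly so that the reflected functions solve the same equations; the remaining steps are routine integrating-factor manipulations.
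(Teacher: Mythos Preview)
Your proposal is correct and follows essentially the same route as the paper: reduce to the lower-triangular system \eqref{eoparts3}, solve $x_o$ then $x_e$ by integrating factors, enforce the parity constraints (forcing $x_o(0)=0$), and read off the boundary condition as $x_o(T)=0$. The only real difference is in the verification step: the paper simply asserts that $x_e$ is even and $x_o$ is odd iff $\tilde c=0$, and then checks by direct substitution that $u_c$ satisfies \eqref{eq2cp}, whereas you justify the parities via a reflection/uniqueness argument and rely on the (correct) fact that an odd/even pair solving \eqref{eoparts} yields a solution of \eqref{eq2cp}. Your parity argument is a bit more transparent; the paper's direct substitution is more self-contained but longer.
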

\begin{proof} We know that any solution of problem \eqref{eq2cp} has to satisfy \eqref{eoparts}. In the case (C4), the matrix in \eqref{eoparts} is lower triangular
\begin{align}\label{eoparts3}\begin{pmatrix}x_o' \\ x_e'\end{pmatrix} & =\begin{pmatrix}-b_o & 0 \\ 2a_e & -b_o\end{pmatrix}\begin{pmatrix}x_o \\ x_e\end{pmatrix}+\begin{pmatrix}h_e \\ h_o\end{pmatrix}.
\end{align}
 so, the solutions of \eqref{eoparts3} are given by
 
\begin{align*}x_o(t) &
=e^{-B_e(t)}\[\tilde c+\int_0^te^{B_e(s)}h_e(s)\dif s\]
 ,\\x_e(t) & =e^{-B_e(t)}\[c+\int_0^t\(e^{B_e(s)}h_o(s)+2a_e(s)\[\tilde c+\int_0^se^{B_e(r)}h_e(r)\dif r\]\)\dif s\],\end{align*}
where $c$, $\tilde c\in\bR$. $x_e$ is even independently of the value of $c$. Nevertheless, $x_o$ is odd only when $\tilde c=0$. Hence, a solution of \eqref{eq2cp}, if it exists, it has the form
\eqref{e-c4}.\par
To show the second implication it is enough to check that $u_c$ is a solution of the problem \eqref{eq2cp}.
\begin{align*}
u'_c(t)  = & -b_o(t)e^{-B_e(t)}\[c+\int_0^t\(e^{B_e(s)}h(s)+2a_e(s)\int_0^se^{B_e(r)}h_e(r)\dif r\)\dif s\] \\& +e^{-B_e(t)}\(e^{B_e(t)}h(t)+2a_e(t)\int_0^te^{B_e(r)}h_e(r)\dif r\)=h(t)-b_o(t)u(t)+2a_e(t)e^{-B_e(t)}\int_0^te^{B_e(r)}h_e(r)\dif r.
\end{align*}
Now,
\begin{align*}
& a_e(t)(u_c(-t)-u_c(t))+2a_e(t)e^{-B_e(t)}\int_0^te^{B_e(r)}h_e(r)\dif r\\= & a_e(t)e^{-B_e(t)}\[c-\int_0^t\(e^{B_e(s)}h(-s)-2a_e(s)\int_0^se^{B_e(r)}h_e(r)\dif r\)\dif s\]\\ & -  a_e(t)e^{-B_e(t)}\[c+\int_0^t\(e^{B_e(s)}h(s)+2a_e(s)\int_0^se^{B_e(r)}h_e(r)\dif r\)\dif s\]+2a_e(t)e^{-B_e(t)}\int_0^te^{B_e(r)}h_e(r)\dif r\\= & -2a_e(t)e^{-B_e(t)}\int_0^te^{B_e(r)}h_e(r)\dif s+2a_e(t)e^{-B_e(t)}\int_0^te^{B_e(r)}h_e(r)\dif r=  0.
\end{align*}
Hence,
$$u_c'(t)+a_e(t)u_c(-t)+(-a_e(t)+b_o(t))u_c(t)=h(t),\ a.\,e. t\in I.$$
The boundary condition $x(-T)-x(T)=0$ is equivalent to $x_o(T)=0$, this is,
$$\int_0^Te^{B_e(s)}h_e(s)\dif s=0$$
and the result is concluded.
\end{proof}

\begin{thm} If condition (C5) holds, then problem \eqref{eq2cp} has solution if and only if
\begin{equation}
\label{conodd2}
\int_0^T e^{B(s)-A(s)}h_e(s)\dif s=0,\end{equation}
and in that case the solutions of \eqref{eq2cp} are given by
\begin{equation}
\label{e-c5}
u_c(t)=e^{A(t)}\int_0^te^{-A(s)}h_e(s)\dif s+e^{-A(t)}\[c+\int_0^te^{A(s)}h_o(s)\dif s\]\nkp\text{for } c\in\bR.\end{equation}
\end{thm}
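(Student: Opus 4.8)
The plan is to mirror the argument just carried out for (C4): pass to the odd--even system \eqref{eoparts}, solve it, and then read off the constraints imposed by the parity requirements and by the periodic boundary condition. Any solution $x$ of \eqref{eq2cp} splits as $x=x_o+x_e$ with $x_o$ odd and $x_e$ even, and these parts satisfy \eqref{eoparts}. Under (C5) we have $a_e=b_e=0$, so $a=a_o$ and $b=b_o$ are odd; consequently the coefficient matrix in \eqref{eoparts} is \emph{diagonal} and the system decouples into the two scalar linear equations
$$x_o'(t)=(a(t)-b(t))\,x_o(t)+h_e(t),\qquad x_e'(t)=-(a(t)+b(t))\,x_e(t)+h_o(t).$$
Since $a,b$ are odd, their primitives $A$ and $B$ are even, and the homogeneous factors $e^{A-B}$ and $e^{-A-B}$ that appear here are exactly the two fundamental solutions $\a e^{A-B}+\b e^{-A-B}$ found for (C5) in Section 3.

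First I would integrate each scalar equation with its integrating factor, $e^{B-A}$ for the $x_o$-equation and $e^{A+B}$ for the $x_e$-equation, obtaining
$$x_o(t)=e^{A(t)-B(t)}\[C_1+\int_0^te^{B(s)-A(s)}h_e(s)\dif s\],\qquad x_e(t)=e^{-A(t)-B(t)}\[c+\int_0^te^{A(s)+B(s)}h_o(s)\dif s\],$$
with $C_1,c\in\bR$. The next step is to impose the correct parities. Because $e^{A-B}$ is even while $\int_0^t e^{B-A}h_e$ is odd (its integrand is even), the term $C_1e^{A-B}$ is the only even contribution to $x_o$, so oddness of $x_o$ forces $C_1=0$. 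Dually, $e^{-A-B}$ is even and $\int_0^t e^{A+B}h_o$ is even (its integrand is odd), whence $x_e$ is even for \emph{every} $c$, leaving exactly one free parameter. Equivalently, one checks that $t\mapsto-x_o(-t)$ and $t\mapsto x_e(-t)$ solve the same scalar Cauchy problems, so uniqueness pins down the parities precisely when $C_1=0$; this is also what guarantees that the reassembled $x=x_o+x_e$ genuinely solves the reflection equation \eqref{eq2cp} and not merely the system.

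It then remains to encode the boundary condition. As in the (C4) case, $x(-T)=x(T)$ is equivalent to $x_o(T)=0$; since $x_o(T)=e^{A(T)-B(T)}\int_0^Te^{B(s)-A(s)}h_e(s)\dif s$ with a nonvanishing prefactor, this is precisely the solvability condition $\int_0^Te^{B(s)-A(s)}h_e(s)\dif s=0$, which settles the ``only if'' direction. Conversely, assuming this condition, setting $C_1=0$ and assembling $u_c=x_o+x_e$ yields a function with $u_c(-T)=u_c(T)$, and a direct substitution into \eqref{eq2cp} (identical in spirit to the verification performed for (C4)) confirms it solves the equation; this gives the ``if'' direction and the displayed family \eqref{e-c5}.

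The step I expect to be the most delicate is the parity bookkeeping of the second paragraph: the entire content of the theorem — which constant survives and why the solvability condition is exactly the displayed integral — rests on the observation that oddness of $a,b$ makes $A,B$, and hence the homogeneous factors, even, so that demanding $x_o$ odd removes one degree of freedom and converts the periodic condition into a single scalar constraint on $h_e$. Everything else reduces to routine first-order integration and a substitution check.
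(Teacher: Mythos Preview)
Your proposal is correct and follows essentially the same route as the paper: diagonalize \eqref{eoparts} under (C5), solve the two decoupled scalar equations via integrating factors, use evenness of $A,B$ to force $C_1=0$ while leaving $c$ free, and then read off \eqref{conodd2} from $x_o(T)=0$ before verifying by direct substitution. The only cosmetic difference is that you justify the parity step a bit more explicitly (via the parity of the integrands) than the paper does.
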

\begin{proof}  In the case (C5), $b_o=b$ and $a_o=a$. Also, the matrix in \eqref{eoparts} is diagonal
\begin{align}\label{eoparts5}\begin{pmatrix}x_o' \\ x_e'\end{pmatrix} & =\begin{pmatrix}a_o-b_o & 0 \\ 0 & -a_o-b_o\end{pmatrix}\begin{pmatrix}x_o \\ x_e\end{pmatrix}+\begin{pmatrix}h_e \\ h_o\end{pmatrix}.
\end{align}
 and the solutions of \eqref{eoparts5} are given by

\begin{align*}x_o(t) & =e^{A(t)-B(t)}\[\tilde c+\int_0^te^{B(s)-A(s)}h_e(s)\dif s\],\\x_e(t) & =e^{-A(t)-B(t)}\[c+\int_0^te^{A(s)+B(s)}h_o(s)\dif s\],\end{align*}
where $c$, $\tilde c\in\bR$. Since $a$ and $b$ are odd, $A$ and $B$ are even. So, $x_e$ is even independently of the value of $c$. Nevertheless, $x_o$ is odd only when $\tilde c=0$. In such a case, since we need, as in the previous Theorem, that $x_o(T)=0$, we get condition \eqref{conodd2}, which allows us to deduce the first implication of the Theorem.\par
Any solution $u_c$ of \eqref{eq2cp} has the expression \eqref{e-c5}.\par
To show the second implication, it is enough to check that $u$ is a solution of the problem \eqref{eq2cp}.
$$
u_c'(t)=(a(t)-b(t))e^{A(t)-B(t)}\int_0^te^{B(s)-A(s)}h_e(s)\dif s-(a(t)+b(t))e^{-A(t)-B(t)}\[c+\int_0^te^{A(s)+B(s)}h_o(s)\dif s\]+h(t).$$
Now,
\begin{align*}\ & a(t)u_c(-t)+b(t)u_c(t)=a(t)\(-e^{A(t)-B(t)}\int_0^te^{B(s)-A(s)}h_e(s)\dif s+e^{-A(t)-B(t)}\[c+\int_0^te^{A(s)+B(s)}h_o(s)\dif s\]\)\\ & +b(t)\(e^{A(t)-B(t)}\int_0^te^{B(s)-A(s)}h_e(s)\dif s+e^{-A(t)-B(t)}\[c+\int_0^te^{A(s)+B(s)}h_o(s)\dif s\]\)\\ = &-(a(t)-b(t))e^{A(t)-B(t)}\int_0^te^{B(s)-A(s)}h_e(s)\dif s+(a(t)+b(t))e^{-A(t)-B(t)}\[c+\int_0^te^{A(s)+B(s)}h_o(s)\dif s\].
\end{align*}
So clearly,
$$u_c'(t)+a(t)u_c(-t)+b(t)u_c(t)=h(t)\quad\text{for a.e. } t\in I.$$
which ends the proof.
\end{proof}

\section{The mixed case}

When we are not on the cases (C1)-(C5), since the fundamental matrix of $M$ is not given by its exponential matrix, it is more difficult to precise when problem \eqref{eq2cp} has a solution. Here we present some partial results.
\par
Consider the following ODE
\begin{equation}\label{usualp}x'(t)+[a(t)+b(t)]x(t)=0,\quad x(-T)=x(T).\end{equation}
The following lemma gives us the explicit Green's function for this problem. Let $\upsilon=a+b$.
\begin{lem}\label{lem1} Let $h$, $a$ in problem \eqref{usualp} be in $L^1(I)$ and assume $\int_{-T}^{T}\upsilon(t)\dif t\ne0$. Then problem  \eqref{usualp} has a unique solution given by
$$u(t)=\int_{-T}^{T}G_3(t,s)h(t)\dif s,$$
where
\begin{equation}\label{e-G3} G_3(t,s)=\begin{cases}\tau\,e^{\int_t^s\upsilon(r)\dif r}, & s\le t,\\(\tau-1)e^{\int_t^s\upsilon(r)\dif r}, & s> t,\end{cases}\quad \text{and}\quad \tau=\frac{1}{1-e^{-\int_{-T}^T\upsilon(r)\dif r}}.\end{equation}
\end{lem}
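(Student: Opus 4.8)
The plan is to read \eqref{usualp} as the inhomogeneous first order linear equation $x'(t)+\upsilon(t)x(t)=h(t)$ together with the periodic condition $x(-T)=x(T)$, and to solve it by the classical integrating factor method, reading off the kernel $G_3$ only at the very end. First I would fix the base point $-T$ and set $V(t):=\int_{-T}^t\upsilon(r)\dif r$, so that multiplying the equation by $e^{V(t)}$ rewrites the left hand side as $\frac{\dif}{\dif t}\(e^{V(t)}x(t)\)$. Integrating from $-T$ to $t$ then gives the general solution
$$x(t)=e^{-V(t)}\(C+\int_{-T}^te^{V(s)}h(s)\dif s\),\quad C\in\bR,$$
and one notes that $C=x(-T)$.

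Next I would impose the boundary condition to pin down $C$. Since $x(-T)=C$ and $x(T)=e^{-V(T)}\(C+\int_{-T}^Te^{V(s)}h(s)\dif s\)$, the requirement $x(-T)=x(T)$ collapses to a single scalar linear equation for $C$, namely $C\(1-e^{-V(T)}\)=e^{-V(T)}\int_{-T}^Te^{V(s)}h(s)\dif s$. This is exactly where the hypothesis is used: because $V(T)=\int_{-T}^T\upsilon(r)\dif r\ne0$, the coefficient $1-e^{-V(T)}$ does not vanish, so $C$ is uniquely determined, which yields existence and uniqueness simultaneously. Solving and recalling $\tau=\frac{1}{1-e^{-V(T)}}$, so that $\tau-1=\frac{e^{-V(T)}}{1-e^{-V(T)}}$, gives $C=(\tau-1)\int_{-T}^Te^{V(s)}h(s)\dif s$.

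Finally I would substitute this value of $C$ back and collect the two integral contributions into a single kernel. Using $e^{V(s)-V(t)}=e^{\int_t^s\upsilon(r)\dif r}$, the base point $-T$ cancels and
$$x(t)=(\tau-1)\int_{-T}^Te^{\int_t^s\upsilon(r)\dif r}h(s)\dif s+\int_{-T}^te^{\int_t^s\upsilon(r)\dif r}h(s)\dif s.$$
For $s>t$ only the first term contributes, with coefficient $(\tau-1)e^{\int_t^s\upsilon(r)\dif r}$; for $s\le t$ both terms contribute, and $(\tau-1)+1=\tau$ produces $\tau\,e^{\int_t^s\upsilon(r)\dif r}$. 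These are precisely the two branches of $G_3$ in \eqref{e-G3}, so $u(t)=\int_{-T}^TG_3(t,s)h(s)\dif s$, as claimed. There is no genuine analytic obstacle in this argument; the only steps demanding care are the bookkeeping that makes the base point cancel, so that $G_3$ depends on $(t,s)$ only through $\int_t^s\upsilon$, and the recognition that the nonvanishing of $\int_{-T}^T\upsilon$ is exactly the solvability condition for the constant $C$.
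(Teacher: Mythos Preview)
Your argument is correct, but it proceeds quite differently from the paper's own proof. The paper does not derive $G_3$; it takes the formula \eqref{e-G3} as given and \emph{verifies} it: it checks that $\frac{\partial G_3}{\partial t}(t,s)+\upsilon(t)G_3(t,s)=0$ for $s\neq t$, then differentiates $u(t)=\int_{-T}^T G_3(t,s)h(s)\dif s$ under the integral sign, picking up the jump $G_3(t,t^-)-G_3(t,t^+)=1$ to produce $h(t)$, and finally checks $u(T)=u(-T)$ by direct substitution. You instead run the integrating factor forward, solve for the free constant via the boundary condition, and only then read off the kernel. Your route is more elementary and has the advantage that uniqueness falls out automatically (the equation for $C$ has a unique solution precisely when $\int_{-T}^T\upsilon\neq0$), whereas the paper's verification argument establishes existence only and leaves uniqueness implicit. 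The paper's approach, on the other hand, is closer in spirit to the Green's-function verifications used elsewhere in the article (e.g.\ Theorem~\ref{Greenf2}).
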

\begin{proof}
$$\frac{\partial G_3}{\partial t}(t,s)=\begin{cases}-\tau\,\upsilon(t)\,e^{\int_t^s\upsilon(r)\dif r}, & s\le t,\\-(\tau-1)\upsilon(t)e^{\int_t^s\upsilon(r)\dif r}, & s> t,\end{cases}=-\upsilon(t)G_3(t,s).$$
Therefore,
$$\frac{\partial G_3}{\partial t}(t,s)+\upsilon(t)G_3(t,s)=0,\ s\ne t.$$
Hence,
\begin{align*}
& u'(t)+\upsilon(t)u(t)=\frac{\dif}{\dif t}\int_{-T}^{t}G_3(t,s)h(s)\dif s+\frac{\dif}{\dif t}\int_{t}^{T}G_3(t,s)h(s)\dif s+\upsilon(t)\int_{-T}^{T}G_3(t,s)h(s)\dif s\\= & [G_3(t,t^-)-G_3(t,t^+)]h(t)+\int_{-T}^{T}\[\frac{\partial G_3}{\partial t}(t,s)+\upsilon(t)G_3(t,s)\]h(t)\dif s=h(t)\nkp\text{a.\,e. }t\in I.
\end{align*}
The boundary conditions are also satisfied.
\begin{align*}
& u(T)-u(-T)=\int_{-T}^{T}\[\tau\,e^{\int_T^s\upsilon(r)\dif r}-(\tau-1)e^{\int_{-T}^s\upsilon(r)\dif r}\]h(s)\dif s\\= & \int_{-T}^{T}
\[\frac{e^{\int_T^s\upsilon(r)\dif r}}{1-e^{-\int_{-T}^T\upsilon(r)\dif r}}-\frac{e^{-\int_{-T}^T\upsilon(r)\dif r}\,e^{\int_{-T}^s\upsilon(r)\dif r}}{1-e^{-\int_{-T}^T\upsilon(r)\dif r}}\]h(s)\dif s\\= & \int_{-T}^{T}
\[\frac{e^{\int_T^s\upsilon(r)\dif r}}{1-e^{-\int_{-T}^T\upsilon(r)\dif r}}-\frac{e^{-\int_{T}^s\upsilon(r)\dif r}}{1-e^{-\int_{-T}^T\upsilon(r)\dif r}}\]h(s)\dif s=0.
\end{align*}
\end{proof}
\begin{lem} \begin{equation}
\label{e-Fv}
|G_3(t,s)|\le F(\upsilon):=\frac{e^{\|\upsilon\|_1}}{|e^{\|\upsilon^+\|_1}-e^{\|\upsilon^-\|_1}|}.\end{equation}
\end{lem}
\begin{proof}
Observe that
$$\tau=\frac{1}{1-e^{\|\upsilon^-\|_1-\|\upsilon^+\|_1}}=\frac{e^{\|\upsilon^+\|_1}}{e^{\|\upsilon^+\|_1}-e^{\|\upsilon^-\|_1}}.$$
Hence,
$$\tau-1=\frac{e^{\|\upsilon^-\|_1}}{e^{\|\upsilon^+\|_1}-e^{\|\upsilon^-\|_1}}.$$
On the other hand,
$$e^{\int_t^s\upsilon(r)\dif r}\le\begin{cases} e^{\|\upsilon^-\|_1}, & s\le t,\\e^{\|\upsilon^+\|_1}, & s>t,
\end{cases}$$
which ends the proof.
\end{proof}
The next result proves the existence and uniqueness of solution of $\eqref{eq2cp}$ when $\upsilon$ is `sufficiently small'.
\begin{thm}\label{thmpn}Let $h$, $a$, $b$ in problem \eqref{eq2cp} be in $L^1(I)$ and assume $\int_{-T}^{T}\upsilon(t)\dif t\ne0$. Let $W:=\{(2T)^\frac{1}{p}(\|a\|_{p^*}+\|b\|_{p^*})\}_{p\in[1,+\infty]}$ where $p^{-1}+(p^*)^{-1}=1$. If $F(\upsilon)\|a\|_1(\inf W)<1$, $F(\upsilon)$ defined as in \eqref{e-Fv}, then problem  \eqref{eq2cp} has a unique solution.
\end{thm}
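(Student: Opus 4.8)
The plan is to recast \eqref{eq2cp} as a fixed point equation built on the Green's function $G_3$ of Lemma \ref{lem1}. Writing $\upsilon=a+b$ and moving the ``diagonal'' part of the coefficient to the left, note that $a(t)x(-t)+b(t)x(t)=\upsilon(t)x(t)+a(t)\,(x(-t)-x(t))$, so \eqref{eq2cp} is equivalent to
$$x'(t)+\upsilon(t)x(t)=h(t)+a(t)\,(x(t)-x(-t)),\quad x(-T)=x(T).$$
Since $\int_{-T}^T\upsilon\ne 0$, Lemma \ref{lem1} applies (and guarantees $F(\upsilon)<\infty$), and this is in turn equivalent to the integral equation $x=\Phi x$, where $\Phi x:=Kh+Nx$, with $(Kg)(t):=\int_{-T}^TG_3(t,s)g(s)\dif s$ the solution operator of \eqref{usualp} and
$$(Nx)(t):=\int_{-T}^TG_3(t,s)\,a(s)\,[x(s)-x(-s)]\dif s.$$
I would work in the Banach space $C(I)$.

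The heart of the argument is the a priori bound on $N$. Here I use that a genuine solution satisfies $x'=h-a\,x(-\cdot)-b\,x$, so for the homogeneous version (with $h\equiv0$) one has $\|x'\|_{p^*}\le(\|a\|_{p^*}+\|b\|_{p^*})\|x\|_\infty$. Then, writing $x(s)-x(-s)=\int_{-s}^{s}x'(r)\dif r$ and applying Hölder on an interval of length at most $2T$, I get $|x(s)-x(-s)|\le (2T)^{1/p}(\|a\|_{p^*}+\|b\|_{p^*})\,\|x\|_\infty$ for every $p\in[1,\infty]$; optimising over $p$ replaces the constant by $\inf W$. Combining this with the uniform bound $|G_3|\le F(\upsilon)$ from \eqref{e-Fv} and $\|a(\cdot)\|_1$ yields $\|Nx\|_\infty\le F(\upsilon)\,\|a\|_1\,(\inf W)\,\|x\|_\infty$, so the hypothesis $F(\upsilon)\|a\|_1(\inf W)<1$ makes this constant strictly less than $1$.

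Uniqueness is then immediate: if $u,v$ solve \eqref{eq2cp}, then $w:=u-v$ solves the homogeneous problem, is absolutely continuous, and satisfies $w=Nw$; the estimate forces $\|w\|_\infty\le F(\upsilon)\|a\|_1(\inf W)\|w\|_\infty$, hence $w\equiv0$. The delicate point, and the main obstacle, is existence: the sharp estimate above invokes the differential equation and therefore controls $N$ only on genuine solutions, not on an arbitrary element of $C(I)$, for which one has merely the cruder $\|N\|\le 2F(\upsilon)\|a\|_1$. I would circumvent this by observing that $N$ is compact on $C(I)$ (the kernel $G_3$ is bounded and, since $\partial_tG_3=-\upsilon\,G_3$ off the diagonal, its translates $G_3(\cdot,s)$ are equicontinuous in $t$ uniformly in $s$, so Arzel\`a--Ascoli applies to $N$ of a bounded set) and then invoking the Fredholm alternative: $I-N$ is invertible precisely when $\ker(I-N)=\{0\}$, which is exactly the triviality of the homogeneous solution established above. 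Thus $x=(I-N)^{-1}Kh$ is the unique solution. Alternatively one may run a genuine contraction by restricting $\Phi$ to the closed subset of $C(I)$ of functions obeying the structural derivative bound, but the Fredholm route keeps the smallness condition in exactly the stated form. The remaining verifications---the equivalence of the reformulation and the continuity and compactness claims---are routine given Lemma \ref{lem1} and \eqref{e-Fv}.
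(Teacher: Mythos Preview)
Your argument is correct, but it diverges from the paper's at the crucial step. The paper performs one more substitution \emph{before} passing to the integral equation: writing $x(-t)=x(t)+\int_t^{-t}x'(s)\,\dif s$ and then replacing $x'(s)$ by $h(s)-a(s)x(-s)-b(s)x(s)$ via the equation itself. After applying $G_3$ and Fubini, this produces an operator
\[
Hx(t)=-\int_{-T}^{T}\rho(t,r)\bigl(a(r)x(-r)+b(r)x(r)\bigr)\dif r,\qquad |\rho|\le F(\upsilon)\|a\|_1,
\]
which is a \emph{genuine} contraction on $L^p(I)$ with constant $(2T)^{1/p}F(\upsilon)\|a\|_1(\|a\|_{p^*}+\|b\|_{p^*})$; taking the optimal $p$ gives exactly $F(\upsilon)\|a\|_1(\inf W)<1$, and Banach's theorem yields existence and uniqueness in one stroke.

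Your operator $N$ stops one step short, so---as you correctly diagnose---the sharp bound holds only on solutions of the homogeneous problem, and you must invoke compactness and the Fredholm alternative to recover existence. That route is sound: $N$ is linear and compact on $\cC(I)$ (your equicontinuity argument via $\partial_t G_3=-\upsilon G_3$ and the unit jump at $s=t$ works, since $(Nx)'\in L^1$ uniformly on bounded sets), and any $w\in\ker(I-N)$ is automatically an absolutely continuous solution of the homogeneous \eqref{eq2cp}, whence your estimate forces $w\equiv 0$. The trade-off is that the paper's argument is constructive (an explicit iteration in $L^p$) and avoids Riesz--Schauder theory, while your reformulation is simpler to set up but leans on abstract Fredholm machinery. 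Both reach the same smallness condition; the paper simply bakes the derivative substitution into the operator rather than invoking it a posteriori on kernel elements.
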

\begin{proof}
With some manipulation we get
 $$h(t)  = x'(t)+a(t)\(\int_t^{-t}x'(s)\dif s+x(t)\)+b(t)x(t)=x'(t)+\upsilon(t)x(t)+a(t)\int_t^{-t}(h(s)-a(s)x(-s)-b(s)x(s))\dif s.$$
Hence,
$$x'(t)+\upsilon(t)x(t)=a(t)\int_t^{-t}(a(s)x(-s)+b(s)x(s))\dif s+a(t)\int_{-t}^{t}h(s)\dif s+h(t).$$
Using $G_{3}$ defined as in \eqref{e-G3} and Lemma \ref{lem1}, it is clear that
$$x(t)=\int_{-T}^TG_3(t,s)a(s)\int_s^{-s}(a(r)x(-r)+b(r)x(r))\dif r\dif s+\int_{-T}^TG_3(t,s)\[a(s)\int_{-s}^{s}h(r)\dif r+h(s)\]\dif s,$$
this is, $x$ is a fixed point of an operator of the form $Hx(t)+\beta(t)$, so, by Banach contraction Theorem, it is enough to prove that $\|H\|<1$ for some compatible norm of $H$.
\par
Using Fubini's Theorem,
$$Hx(t)=-\int_{-T}^{T}\rho(t,r)(a(r)x(-r)+b(r)x(r))\dif r,$$
where $\rho(t,r)=\[\int_{|r|}^{T}-\int_{-T}^{-|r|}\]G_3(t,s)a(s)\dif s$.

If $\int_{-T}^{T}\upsilon(t)\dif t=\|\upsilon^+\|_1-\|\upsilon^-\|_1>0$ then $G_3$ is positive and
$$\rho(t,r)\le\int_{-T}^{T}G_3(t,s)|a(s)|\dif s\le F(\upsilon)\|a\|_1.$$
We have the same estimate for $-\rho(t,r)$.\par
If $\int_{-T}^{T}\upsilon(t)\dif t<0$ we proceed with an analogous argument and arrive as well to the conclusion that
$|\rho(t,s)|<F(\upsilon)\|a\|_1$.\par Hence, $|Hx(t)|\le F(\upsilon)\|a\|_1\int_{-T}^{T}|a(r)x(-r)+b(r)x(r)|\dif r=F(\upsilon)\|a\|_1\|a(r)x(-r)+b(r)x(r)\|_1$.
Thus, it is clear that
$$\|Hx\|_p  \le (2T)^\frac{1}{p}F(\upsilon)\|a\|_1(\|a\|_{p^*}+\|b\|_{p^*})\|x\|_p,\ p\in[1,\infty],$$
which ends the proof.
\end{proof}
\begin{rem} In the hypothesis of Theorem \ref{thmpn}, realize that $F(\upsilon)\ge 1$.
\end{rem}

The following result will let us obtain some information on the sign of the solution of problem \eqref{eq2cp}. In order to prove it, we will use a theorem from \cite{Cab5} we cite below.
\par
Consider an interval $[w,d]\subset I$, the cone
\begin{equation*}\label{eqcone-cs}
K=\{u\in \cC(I): \min_{t \in [w,d]}u(t)\geq c \|u\|\},
\end{equation*}
and the following problem
\begin{equation}\label{eqgenpro2}
x'(t)  =h(t,x(t),x(-t)),\,  t\in I,\quad x(-T)=x(T),
\end{equation}
where $h$ is an $L^1$-Caratheodory function.
Consider the following conditions.

\begin{enumerate}
\item[$(\mathrm{I}_{\protect\rho,\omega}^{1})$] \label{EqB2} There exist $\rho> 0$ and $\omega\in\(0,\frac{\pi}{4T}\]$ such that $f^{-\rho,\rho}_\omega <\omega$ where
$$
  f^{{-\rho},{\rho}}_\omega:=\sup \left\{\frac{h(t,u,v)+\omega v}{\rho }:\;(t,u,v)\in
[ -T,T]\times [ -\rho,\rho ]\times [-\rho,\rho ]\right\}.$$

\item[$(\mathrm{I}_{\protect\rho,\omega}^{0})$] There exists $\rho >0$ such that
$$
f_{(\rho ,{\rho /c})}^\omega\cdot\inf_{t\in [w,d]}\int_{w}^{d}\ol G(t,s)\,ds>1,
$$
where
$$
f_{(\rho ,{\rho /c})}^\omega =\inf \left\{\frac{h(t,u,v)+\omega v}{\rho }%
:\;(t,u,v)\in [w,d]\times [\rho ,\rho /c]\times
[-\rho /c,\rho /c]\right\}.$$
\end{enumerate}

\begin{thm}\textrm{\cite[Theorem 5.15]{Cab5}}\label{thmgen}

Let $\omega\in\(0,\frac{\pi}{2}T\]$. Let $[w,d]\subset I$ such that $w=T-d\in(\max\{0,T-\frac{\pi}{4\omega}\},\frac{T}{2})$. Let
\begin{equation}\label{e-c}c=\frac{[1-\tan(\omega d)][1-\tan(\omega w)]}{[1+\tan(\omega d)][1+\tan(\omega w)]}.\end{equation}

Problem \eqref{eqgenpro2} has at least one non-zero solution
in $K$ if either of the following conditions hold.

\begin{enumerate}

\item[$(S_{1})$] There exist $\rho _{1},\rho _{2}\in (0,\infty )$ with $\rho
_{1}/c<\rho _{2}$ such that $(\mathrm{I}_{\rho _{1},\omega}^{0})$ and $(\mathrm{I}_{\rho _{2},\omega}^{1})$ hold.

\item[$(S_{2})$] There exist $\rho _{1},\rho _{2}\in (0,\infty )$ with $\rho
_{1}<\rho _{2}$ such that $(\mathrm{I}_{\rho _{1},\omega}^{1})$ and $(\mathrm{I}%
_{\rho _{2},\omega}^{0})$ hold.
\end{enumerate}

\end{thm}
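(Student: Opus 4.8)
The plan is to realize problem \eqref{eqgenpro2} as a fixed point equation in the cone $K$ and then to read off the two growth hypotheses $(\mathrm{I}_{\rho,\omega}^{1})$ and $(\mathrm{I}_{\rho,\omega}^{0})$ as fixed point index computations on two concentric balls of $K$, concluding by the additivity of the index.

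First I would fix $\omega$ in the admissible range (as forced by $(\mathrm{I}_{\rho,\omega}^{1})$, $\omega\in\(0,\frac{\pi}{4T}\]$) so that, by Theorem \ref{alphasign}, the Green's function $\ol G$ of the linear operator $R_\omega x(t)=x'(t)+\omega x(-t)$ under periodic conditions is positive on $I^2$ (vanishing only on the finite set $P$ in the limiting case $\omega T=\frac{\pi}{4}$). Rewriting \eqref{eqgenpro2} as
$$x'(t)+\omega x(-t)=h(t,x(t),x(-t))+\omega x(-t),$$
Theorem \ref{Greenf2} in the case $b=0$ identifies the solutions with the fixed points of
$$Tx(t):=\int_{-T}^{T}\ol G(t,s)\[h(s,x(s),x(-s))+\omega x(-s)\]\dif s.$$
Since $h$ is $L^1$-Carath\'eodory and $\ol G$ is bounded and continuous off the diagonal, a standard Arzel\`a--Ascoli argument shows $T:\cC(I)\to\cC(I)$ is completely continuous.

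The crucial second step is to show $T(K)\subset K$ and, simultaneously, to pin down the constant $c$. Here I would establish a Harnack-type estimate for $\ol G$: using its explicit piecewise trigonometric form and its positivity, one bounds below, uniformly in $s$, the ratio of $\min_{t\in[w,d]}\ol G(t,s)$ to $\max_{\tau\in I}\ol G(\tau,s)$, and a direct computation of this ratio over the symmetric subinterval $[w,d]$ with $w=T-d$ produces precisely the value \eqref{e-c}. This yields $\min_{t\in[w,d]}Tx(t)\ge c\,\|Tx\|$, that is $T(K)\subset K$. I expect this Harnack computation to be the main obstacle, since it is where the exact expression for $c$ is forced and where the geometry of $[w,d]$ (through $\tan(\omega w)$ and $\tan(\omega d)$) enters the argument.

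With invariance in hand, the hypotheses become index computations on $K_\rho:=K\cap\{\|x\|<\rho\}$. Under $(\mathrm{I}_{\rho,\omega}^{1})$ the bound $h(t,u,v)+\omega v<\omega\rho$ on $[-\rho,\rho]^2$ combined with the normalization $\int_{-T}^{T}\ol G(t,s)\dif s=1/\omega$ (obtained by applying $R_\omega$ to $x\equiv1$ and invoking uniqueness) gives $\|Tx\|\le\|x\|$ on $\partial K_\rho$, whence $i(T,K_\rho,K)=1$. Under $(\mathrm{I}_{\rho,\omega}^{0})$, evaluating on the sphere $\|x\|=\rho/c$ so that $x(t)\in[\rho,\rho/c]$ for $t\in[w,d]$ and $x(-t)\in[-\rho/c,\rho/c]$, the positivity of $\ol G$ gives
$$\min_{t\in[w,d]}Tx(t)\ge\rho\,f_{(\rho,\rho/c)}^\omega\,\inf_{t\in[w,d]}\int_{w}^{d}\ol G(t,s)\dif s>\rho,$$
which by the usual cone-expansion argument forces $i(T,K_{\rho/c},K)=0$. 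Finally, in case $(S_1)$ the balls are properly nested, $K_{\rho_1/c}\subset K_{\rho_2}$ since $\rho_1/c<\rho_2$, so additivity gives $i\bigl(T,K_{\rho_2}\setminus\overline{K_{\rho_1/c}},K\bigr)=1-0\ne0$; in case $(S_2)$, $K_{\rho_1}\subset K_{\rho_2/c}$ since $\rho_1<\rho_2$, giving $i\bigl(T,K_{\rho_2/c}\setminus\overline{K_{\rho_1}},K\bigr)=0-1\ne0$. In either case $T$ has a fixed point in the annular region, which is the sought non-zero solution of \eqref{eqgenpro2} in $K$.
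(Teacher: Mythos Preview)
The paper does not prove this theorem: it is quoted verbatim from \cite[Theorem 5.15]{Cab5} and used as a black box to derive Theorem \ref{thmmix2}. So there is no proof in the paper to compare against.

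That said, your outline is the standard and correct strategy for results of this type, and is in fact the approach taken in \cite{Cab5}: rewrite \eqref{eqgenpro2} as a Hammerstein fixed point equation via the Green's function $\ol G$ of $R_\omega$, use a Harnack inequality for $\ol G$ on the symmetric subinterval $[w,d]$ to produce the cone constant $c$ in \eqref{e-c} and the invariance $T(K)\subset K$, and then compute the fixed point index on $K_\rho$ from the two growth hypotheses. One small correction: in your index step for $(\mathrm{I}_{\rho,\omega}^{0})$ you should work on $\partial K_\rho$, not $\partial K_{\rho/c}$; for $x\in K$ with $\|x\|=\rho$ one has $x(t)\in[c\rho,\rho]$ on $[w,d]$, and the infimum in the hypothesis is taken over $(u,v)\in[\rho,\rho/c]\times[-\rho/c,\rho/c]$ precisely after the rescaling that identifies the relevant annulus. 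With that adjustment the nesting conditions $\rho_1/c<\rho_2$ in $(S_1)$ and $\rho_1<\rho_2$ in $(S_2)$ line up correctly with the radii of the balls on which the indices are $0$ and $1$.
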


\begin{thm}\label{thmmix2}
Let $h\in L^\infty(I)$, $a,b\in L^1(I)$ be such that $0<|b(t)|<a(t)<\omega<\frac{\pi}{2}T$ for a.\,e. $t\in I$ and $\inf h>0$. Then there exists a solution $u$ of \eqref{eq2cp} such that, $u>0$ in $\(\max\{0,T-\frac{\pi}{4\omega}\},\min\{T,\frac{\pi}{4\omega}\}\)$.
\end{thm}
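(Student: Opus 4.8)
The plan is to recast problem \eqref{eq2cp} in the abstract form \eqref{eqgenpro2} and then invoke the cone fixed-point result Theorem \ref{thmgen} from \cite{Cab5}. Matching the slots of the nonlinearity (second slot for $x(t)$, third for $x(-t)$), I would set
$$\tilde h(t,u,v):=h(t)-a(t)\,v-b(t)\,u,$$
so that \eqref{eq2cp} becomes exactly $x'(t)=\tilde h(t,x(t),x(-t))$ with $x(-T)=x(T)$. The bounds $0<|b(t)|<a(t)<\omega$ a.e. force $a,b\in L^\infty(I)$, and together with $h\in L^\infty(I)$ this makes $\tilde h$ an $L^1$-Carath\'eodory function, so Theorem \ref{thmgen} is applicable with the given $\omega$, which by hypothesis lies in the range allowed there. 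I would fix an interval $[w,d]\subset I$ with $w=T-d$ close to $\max\{0,T-\tfrac{\pi}{4\omega}\}$, let $c$ be the constant \eqref{e-c}, and verify condition $(S_1)$.

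The computational heart is the evaluation of the two index quantities, where $|b|<a<\omega$ enters decisively through the sign of the coefficient $\omega-a(t)$. For $(\mathrm{I}^{1}_{\rho,\omega})$ one has $\tilde h(t,u,v)+\omega v=h(t)+(\omega-a(t))v-b(t)u$; maximizing over $(u,v)\in[-\rho,\rho]^2$ (using $\omega-a(t)>0$, taking $v=\rho$ and $u=-\rho\,\sgn b(t)$) gives
$$f^{-\rho,\rho}_\omega=\operatorname*{ess\,sup}_{t\in I}\left[\frac{h(t)}{\rho}+\omega-a(t)+|b(t)|\right].$$
Since $|b(t)|-a(t)<0$ a.e. and $h\in L^\infty$, this is $<\omega$ once $\rho=:\rho_2$ is large. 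For $(\mathrm{I}^{0}_{\rho,\omega})$ one minimizes the same expression over $[w,d]\times[\rho,\rho/c]\times[-\rho/c,\rho/c]$; bounding the linear terms by $(\omega-a(t))\tfrac{\rho}{c}$ and $|b(t)|\tfrac{\rho}{c}$ and using $\inf h>0$ yields
$$f^\omega_{(\rho,\rho/c)}\ge\frac{\inf h}{\rho}-\frac{1}{c}\operatorname*{ess\,sup}_{t\in[w,d]}\big[(\omega-a(t))+|b(t)|\big],$$
which tends to $+\infty$ as $\rho=:\rho_1\to0^+$. Multiplying by the positive constant $\inf_{t\in[w,d]}\int_w^d\ol G(t,s)\dif s$ (positive because $\ol G$ is positive on $[w,d]^2$ for $\omega$ in the admissible range, by the cone construction of \cite{Cab5}) makes $(\mathrm{I}^{0}_{\rho_1,\omega})$ hold. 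As $\rho_1$ is small and $\rho_2$ large, $\rho_1/c<\rho_2$ is automatic, so $(S_1)$ holds and Theorem \ref{thmgen} yields a non-zero solution $u\in K$.

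It remains to read off positivity. Since $u\in K$ and $u\not\equiv0$, we have $\min_{t\in[w,d]}u(t)\ge c\|u\|>0$, hence $u>0$ on $[w,d]$. Finally, because $w$ may be chosen arbitrarily close to $\max\{0,T-\tfrac{\pi}{4\omega}\}$ (equivalently $d$ close to $\min\{T,\tfrac{\pi}{4\omega}\}$), letting the symmetric interval $[w,d]$ exhaust the open interval delivers $u>0$ on $\left(\max\{0,T-\tfrac{\pi}{4\omega}\},\min\{T,\tfrac{\pi}{4\omega}\}\right)$, as claimed.

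I expect the main obstacle to be twofold. First, the explicit computation of the suprema and infima of $\tilde h+\omega v$ over the rectangles requires tracking which corner realizes each extremum, delicate because $b$ may change sign; the inequalities $|b|<a<\omega$ are precisely what guarantee $\omega-a>0$ and $|b|-a<0$, which drive both estimates. Second, and more delicate conceptually, is passing from positivity on a fixed $[w,d]$ to positivity on the full open interval: a single application of Theorem \ref{thmgen} only controls $u$ on the chosen $[w,d]$, so one must argue either through the limiting family of subintervals or via a direct continuation argument using $u'=h-a\,u(-\cdot)-b\,u$ together with $\inf h>0$ to produce one solution positive throughout the target interval.
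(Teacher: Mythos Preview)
Your approach is essentially identical to the paper's: rewrite \eqref{eq2cp} as $x'=\tilde h(t,x(t),x(-t))$ with $\tilde h(t,u,v)=h(t)-a(t)v-b(t)u$ and verify $(S_1)$ of Theorem~\ref{thmgen} via the same sup/inf estimates (choosing $\rho_2$ large for $(\mathrm{I}^1_{\rho_2,\omega})$ using $a-|b|>0$, and $\rho_1$ small for $(\mathrm{I}^0_{\rho_1,\omega})$ using $\inf h>0$). The exhaustion issue you flag in your last paragraph is not addressed by the paper either: its proof simply fixes one admissible $[w,d]\subset\bigl(\max\{0,T-\tfrac{\pi}{4\omega}\},\min\{T,\tfrac{\pi}{4\omega}\}\bigr)$, checks $(S_1)$, and concludes.
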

\begin{proof}
Problem \eqref{eq2cp} can be rewritten as
\begin{equation*}\label{eq2cp3} x'(t)=h(t)-b(t)\,x(t)-a(t)\,x(-t),\nkp t\in I,\quad x(-T)=x(T).
\end{equation*}
With this formulation, we can apply Theorem \ref{thmgen}.
Since $0<a(t)-|b(t)|<\omega$ a.\,e., take $\rho_2\in\bR^+$ large enough such that $h(t)<(a(t)-|b(t)|)\rho_2$ a.\,e. Hence, $h(t)<(a(t)-\omega)\rho_2-|b(t)|\rho_2+\rho_2\omega$ for a.\,e. $t\in I$, in particular, $$h(t)<(a(t)-\omega)v-|b(t)|u+\rho_2\omega\le(a(t)-\omega)v+b(t)\,u+\rho_2\omega\text{ for a.\,e. }t\in I;\ u,v\in[-\rho_2,\rho_2].$$ Therefore,
$$\sup \left\{\frac{h(t)-b(t)u-a(t)v+\omega v}{\rho_2 }:\;(t,v)\in
[ -T,T]\times [-\rho_2,\rho_2]\right\}<\omega,$$
and thus, $(\mathrm{I}_{\rho _{2},\omega}^{1})$ is satisfied.\par
Let $[w,d]\subset I$ be such that $[w,d]\subset\(T-\frac{\pi}{4\omega},\frac{\pi}{4\omega}\)$. Let $c$ be defined as in \eqref{e-c}
and $\e=\omega\int_w^d\ol G(t,s)\dif s$.\par Choose $\d\in(0,1)$ such that $h(t)>\[\(1+\frac{c}{\e}\)\omega-(a(t)-|b(t)|)\]\rho_2\d$ a.\,e. and define $\rho_1:=\d c\rho_2$. Therefore, $h>\[(a(t)-\omega)v+b(t)\,u(t)\]\frac{\omega}{\e}\rho_1$ for a.\,e. $t\in I$, $u\in[\rho_1,\frac{\rho_1}{c}]$ and $v\in[-\frac{\rho_1}{c},\frac{\rho_1}{c}]$. Thus,
$$\inf \left\{\frac{h(t)-b(t)u-a(t)v+\omega v}{\rho_1 }:\;(t,v)\in [w,d]\times
[-\rho_1/c,\rho_1/c]\right\}>\frac{\omega}{\e},$$
and hence,  $(\mathrm{I}_{\rho _{1},\omega}^{0})$ is satisfied.
Finally, $(S_1)$ in Theorem \ref{thmgen} is satisfied and we get the desired result.
\end{proof}
\begin{rem} In the hypothesis of Theorem \ref{thmmix2},  if $\omega<\frac{\pi}{4}T$, we can take $[w,d]=[-T,T]$ and continue with the proof of Theorem \ref{thmmix2} as done above. This guarantees that $u$ is positive.
\end{rem}

\section{Appendix: Further considerations}

\subsection{The general case}
The equation $\eqref{proinv1}$, for the case $\phi(t)=-t$, can be reduced to the following system
\begin{align*}\L\begin{pmatrix}x_o' \\ x_e'\end{pmatrix} & =\begin{pmatrix}a_o-b_o & -a_e-b_e \\ a_e-b_e & -a_o-b_o\end{pmatrix}\begin{pmatrix}x_o \\ x_e\end{pmatrix}+\begin{pmatrix}h_e \\ h_o\end{pmatrix},
\end{align*}
where
\begin{align*}
\L=\begin{pmatrix}c_e+d_e & c_o-d_o \\ c_o+d_o & c_e-d_e\end{pmatrix}.
\end{align*}
Hence, if $\det(\L(t))=c(t)c(-t)-d(t)d(-t)\ne 0$ for a.\,e. $t\in I$, $\L(t)$ is invertible a.\,e. and
\begin{align*}\begin{pmatrix}x_o' \\ x_e'\end{pmatrix} & =\L^{-1}\begin{pmatrix}a_o-b_o & -a_e-b_e \\ a_e-b_e & -a_o-b_o\end{pmatrix}\begin{pmatrix}x_o \\ x_e\end{pmatrix}+\L^{-1}\begin{pmatrix}h_e \\ h_o\end{pmatrix}.
\end{align*}
So the general case where $c\not\equiv0$ is reduced to the case studied on Section 3, taking
$$\L^{-1}\begin{pmatrix}a_o-b_o & -a_e-b_e \\ a_e-b_e & -a_o-b_o\end{pmatrix}$$
as coefficient matrix.
\subsection{Computing the matrix exponential}

It is very well known that, in general, it is difficult to compute the exponential of a functional matrix and it is deeply related to the property of the matrix commuting with its integral. Here we summarize the findings of \cite{Kot} on this behalf.
\begin{dfn} Let $S\subset\bR$ be an interval. Define $\cM\subset\cC^1(\bR,\cM_{n\times n}(\bR))$ such that for every $M\in\cM$,
\begin{itemize}
\item there exists $P\in\cC^1(\bR,\cM_{n\times n}(\bR))$ such that $M(t)=P^{-1}(t)J(t)P(t)$ for every $t\in S$ where $P^{-1}(t)J(t)P(t)$ is a Jordan decomposition of $M(t)$;
\item the superdiagonal elements of $J$ are independent of $t$, as well as the dimensions of the Jordan boxes associated to the different eigenvalues of $M$;
\item two different Jordan boxes of $J$ correspond to different eigenvalues;
\item if two eigenvalues of $M$ are ever equal, they are identical in the whole interval $S$.
\end{itemize}
\end{dfn}
It is straightforward to check that the functional matrices appearing in cases (C1)--(C5) belong to $\cM$.
\begin{thm}[\cite{Kot}] Let $M\in\cM$. Then, the following statements are equivalent.
\begin{itemize}
\item $M$ commutes with its derivative.
\item $M$ commutes with its integral.
\item $M$ commutes functionally, that is $M(t)M(s)=M(s)M(t)$ for all $t,s\in S$.
\item $M=\sum_{k=0}^r\gamma_k(t)C^k$ For some $C\in\cM_{n\times n}(\bR)$ and $\gamma_k\in\cC^1(S,\bR)$, $k=1,\dots,r$.
\end{itemize}
Furthermore, any of the last properties imply that $M(t)$ has a set of constant eigenvectors, i.e. a Jordan decomposition $P^{-1}J(t)P$ where $P$ is constant.
\end{thm}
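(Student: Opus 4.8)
The plan is to prove the four bullets equivalent by running a cycle of implications and then peeling off the final "Furthermore" clause, concentrating all the real work in a single step to which the others reduce. I label the four conditions $(a)$ (commutes with derivative), $(b)$ (commutes with integral), $(c)$ (commutes functionally), $(d)$ (polynomial in a fixed matrix), in the order stated. The cheap implications come first and use nothing about the class $\cM$. For $(d)\Rightarrow(c)$: if $M(t)=\sum_k\gamma_k(t)C^k$, then $M(t)$ and $M(s)$ are both polynomials in the one fixed matrix $C$, hence commute. For $(c)\Rightarrow(a)$: I differentiate $M(t)M(s)=M(s)M(t)$ in $s$ and set $s=t$, obtaining $M(t)M'(t)=M'(t)M(t)$. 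For $(c)\Rightarrow(b)$: I integrate the same identity in $s$ over the interval defining the primitive, pulling the fixed factor $M(t)$ through the integral sign.

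The heart of the theorem is the "Furthermore" clause, which I would establish directly from $(a)$: if $M$ commutes with $M'$, the spectral projections of $M(t)$ are constant. Here the defining hypotheses of $\cM$ are decisive. Because distinct Jordan blocks carry distinct eigenvalues at every point and eigenvalues never collide, each eigenvalue can be enclosed by one fixed contour $\Gamma_i$ valid for all nearby $t$, and the projection $\Pi_i(t)=\frac{1}{2\pi i}\oint_{\Gamma_i}(zI-M(t))^{-1}\,dz$ is a well-defined $\cC^1$ function of $t$. Writing $R(z,t)=(zI-M(t))^{-1}$ and using $\partial_t R=R\,M'\,R$, the hypothesis $(a)$ makes $M'$ commute with $zI-M(t)$ and hence with $R$, so $\partial_t\Pi_i=\frac{1}{2\pi i}\oint_{\Gamma_i}M'\,R^2\,dz=-\frac{M'}{2\pi i}\oint_{\Gamma_i}\partial_z R\,dz=0$, since $R^2=-\partial_z R$ and the contour is closed. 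Thus the generalized eigenspaces $\im\Pi_i$ are constant subspaces, and a fixed basis adapted to them gives a constant $P$ with $M(t)=P^{-1}J(t)P$.

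With constant eigenvectors secured I would close the cycle through $(d)$. Fix any base point $t_0$; by the $\cM$ hypotheses the eigenvalues $\mu_i=\lambda_i(t_0)$ of distinct blocks are distinct, and I set $C:=M(t_0)$. Since $P$ and the superdiagonal of $J$ are constant, on the $i$-th block $M(t)$ acts as $\lambda_i(t)I+N_i$ and $C$ as $\mu_iI+N_i$ with the same constant nilpotent $N_i$; a Hermite interpolation polynomial $p_t$ with $p_t(\mu_i)=\lambda_i(t)$, $p_t'(\mu_i)=1$ and $p_t^{(j)}(\mu_i)=0$ for $2\le j<m_i$ then gives $M(t)=p_t(C)=\sum_k\gamma_k(t)C^k$, the $\gamma_k$ inheriting $\cC^1$ regularity from $\lambda_i(\cdot)$ through the invertible constant interpolation system. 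This is $(d)$. To fold $(b)$ into the cycle I would prove $(b)\Rightarrow(a)$ with no appeal to $\cM$: setting $N=\int_{t_0}^{\cdot}M$, so $N'=M$ and $N(t_0)=0$, I differentiate $MN=NM$ twice and evaluate at $t_0$; every surviving term carries the factor $N(t_0)=0$ except those leaving $M(t_0)M'(t_0)=M'(t_0)M(t_0)$, and as $t_0$ is arbitrary this is $(a)$. The cycle from $(a)$ to constant $P$ to $(d)$ to $(c)$ back to $(a)$, together with $(c)\Rightarrow(b)\Rightarrow(a)$, yields all equivalences and the structural conclusion.

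The step I expect to be the genuine obstacle is the contour-integral computation that the spectral projections are constant, and more precisely the verification that $\cM$ is exactly the hypothesis that legitimizes it: that the contours $\Gamma_i$ can be chosen uniformly in $t$ (no eigenvalue crossings), that $\Pi_i$ is truly $\cC^1$ rather than merely measurable, and that the rigidity of the Jordan data forces the constant spectral subspaces to assemble into a single global constant $P$ rather than a locally constant one. The purely algebraic implications and the Hermite-interpolation construction of the $\gamma_k$ are routine once this is in place.
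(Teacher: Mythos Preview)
The paper does not prove this theorem at all: it is quoted verbatim from \cite{Kot} in the Appendix as background for the exponential computation, with no argument given. There is therefore no ``paper's own proof'' to compare your proposal against; any assessment can only be of your sketch on its own merits, not relative to what the authors did.

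On those merits, your cycle is largely sound and is in the spirit of the original Kotin--Irving argument: the easy implications $(d)\Rightarrow(c)\Rightarrow(a),(b)$ are exactly as you say, and the key structural step---deducing constant spectral projections from $[M,M']=0$ via the Riesz projector and then recovering $(d)$ by Hermite interpolation on a fixed $C=M(t_0)$---is the right idea and is where the $\cM$ hypotheses (no eigenvalue collisions, fixed block pattern) genuinely earn their keep. One point to tighten is your $(b)\Rightarrow(a)$ step: you fix the base point $t_0$ of the primitive $N$, differentiate twice, evaluate at $t_0$, and then declare ``$t_0$ is arbitrary''. But the hypothesis $(b)$ as usually stated gives commutation with a primitive for a \emph{single} fixed base point, and changing the base point shifts $N$ by a constant matrix with which $M(t)$ need not a priori commute. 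You should either justify that commuting with one primitive forces commuting with all (this is not free), or argue directly from $[M,N]=0$ and $[M',N]=0$ at a general $t$ without resetting the base point; the second differentiation then yields $[M',M]=-[M'',N]$, and you still need a reason this vanishes.
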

When we first studied the case (C1) --with the other cases we need  similar considerations-- we needed to compute the exponential of the matrix
$$\ol M=\begin{pmatrix}B_e &  -(1+k)A_o\\ (1-k)A_o & -B_e\end{pmatrix}.$$
$\ol M$ has two complex conjugate eigenvalues. What is more, it functionally commutes, so it has a basis of constant eigenvectors given by the constant matrix
$$Y:=\frac{1}{k-1}\begin{pmatrix}i\sqrt{1-k^2} & -i\sqrt{1-k^2} \\ k-1 & k-1 \end{pmatrix}.$$
We have that
$$Y^{-1}\ol M(t)Y=Z(t):=\begin{pmatrix} -B_e-i\,A_o\sqrt{1-k^2} & 0 \\ 0 & -B_e+i\,A_o\sqrt{1-k^2} \end{pmatrix}.$$

Hence,
$$e^{\ol M(t)}=e^{YZ(t)Y^{-1}}=Ye^{Z(t)}Y^{-1}=e^{-B_e(t)}\begin{pmatrix}\cos\(\sqrt{1-k^2}A(t)\) & -\frac{1+k}{\sqrt{1-k^2}}\sin\(\sqrt{1-k^2}A(t)\)\\ \frac{\sqrt{1-k^2}}{1+k}\sin\(\sqrt{1-k^2}A(t)\) & \cos\(\sqrt{1-k^2}A(t)\)\end{pmatrix}.$$

\end{document}